\newtheorem{theorem}{Theorem}[section]
\newtheorem{proposition}[theorem]{Proposition}
\newtheorem{definition}[theorem]{Definition}
\newtheorem{example}[theorem]{Example}
\newtheorem{lemma}[theorem]{Lemma}
\newtheorem{corollary}[theorem]{Corollary}
 \def\ivq{\in\!\vee\,q}
 \def\iwq{\in \!\wedge\,q}
\begin{document}
\title{\bf Interval valued  $(\in,\ivq)$-fuzzy filters of pseudo $BL$-algebras }
 \normalsize

\author{{Jianming Zhan}$^{a,*}$, {Wies{\l}aw A. Dudek}$^b$, {Young Bae Jun}$^c$\\
 {\small $^a$ Department of Mathematics, Hubei Institute for Nationalities,}\\
 {\small Enshi, Hubei Province, 445000, P. R. China}\\
  {\small $^b$ Institute of Mathematics and Computer Science,
  Wroc{\l}aw University of Technology,}\\
  {\small Wybrze\.ze Wyspia\'nskiego 27, 50-370 Wroc{\l}aw, Poland}  \\
  {\small  $^c$  Department of Mathematics Education,
   Gyeongsang National University,}\\
  {\small Chinju 660-701, Korea }
  }

\date{}\maketitle

\begin{flushleft}\rule[0.4cm]{12cm}{0.3pt}
\parbox[b]{12cm}{\small
\bf Abstract\rm \paragraph{ } We introduce the concept of
quasi-coincidence of a fuzzy interval value with an interval
valued fuzzy set. By using this new idea, we introduce the notions
of interval valued $(\in,\ivq)$-fuzzy filters of pseudo
$BL$-algebras and investigate some of their related properties.
Some characterization theorems of these generalized interval
valued fuzzy  filters are derived. The relationship among these
generalized interval valued fuzzy filters of pseudo $BL$-algebras
is considered. Finally, we consider the concept of
implication-based interval valued fuzzy implicative filters of
pseudo $BL$-algebras, in particular, the implication
operators in Lukasiewicz system of continuous-valued logic are discussed.  \\

{\it Keywords:} Pseudo $BL$-algebra; filter; interval valued
$(\in,\ivq)$-fuzzy filter; fuzzy logic; implication operator.  \\

{\it 2000 Mathematics Subject Classification:} 16Y99; 06D35;  03B52
} \rule{12cm}{0.3pt}
\end{flushleft}
\footnote{* Corresponding author.\\
 \it E-mail address:\rm \ \
  zhanjianming@hotmail.com (J. Zhan), dudek@im.pwr.wroc.pl (W. A. Dudek)}

\section{Introduction}
Logic appears in a ''scared'' form (resp., a ''profane'') which is
dominant in proof theory (resp., model theory). The role of logic
in mathematics and computer science is twofold-as a tool for
applications in both areas, and a technique for laying the
foundations. Non-classical logic including many-valued logic,
fuzzy logic, etc., takes the  advantage of the classical logic to
handle information with various facets of uncertainty (see
\cite{29} for generalized theory of uncertainty), such as
fuzziness, randomness, and so on.  Non-classical logic has become
a formal and useful tool for computer science to deal with fuzzy
information and uncertain information. Among all kinds of
uncertainties, incomparability is an important one which can be
encountered in our life.

$BL$-algebras were introduced by H\'ajek as algebraic structures
for his Basic Logic, starting from continuous $t$-norm and their
residuals (\cite{18}). $MV$-algebras \cite{4}, product algebras
and G\"odel algebras are the most classes of $BL$-algebras.
Filters theory play an important role in studying these algebras.
From logical point of view, various filters correspond to various
sets of provable formulae. H\'ajek \cite{18} introduced the
concepts of (prime) filters of $BL$-algebras. Using prime filters
of $BL$-algebras, he proved the completeness of Basic Logic $BL$.
$BL$-algebras are further discussed by Di Nola (\cite{9} and
\cite{10}), Iorgulescu (\cite{19}), Ma(\cite{20}) and Turunen
(\cite{24}, \cite{25}), and so on.

Recent investigations are concerned with non-commutative
generalizations for these structures (see [8, 11 - 17, 22 - 25, 32
- 33]). In \cite{16}, Georgescu et al. introduced the concept of
pseudo $MV$-algebras as a non-commutative generalization of
$MV$-algebras. Several researchers discussed the properties of
pseudo $MV$-algebras( see \cite{11}, \cite{12}, \cite{22} and
\cite{23}). Pseudo $BL$-algebras are a common extension of
$BL$-algebras and pseudo $MV$-algebras (see \cite{8}, \cite{13},
\cite{14}, \cite{17}, \cite{31}). These structures seem to be a
very general algebraic concept in order to express the
non-commutative reasoning. We remark that a pseudo $BL$-algebra
has two implications and two negations.

After the introduction of fuzzy sets by Zadeh \cite{27}, there
have been a number of generalizations of this fundamental concept.
In \cite{28}, Zadeh made an extension of the concept of a fuzzy
set (i.e., a fuzzy set with an interval valued membership
function). The interval valued fuzzy subgroups were first defined
and studied by Biswas \cite{3} which are the subgroups of the same
natural of the fuzzy subgroups defined by Rosenfeld. A new type of
fuzzy subgroup, that is, the $(\in,\ivq)$-fuzzy subgroup, was
introduced in an earlier paper of Bhakat and Das \cite{2} by using
the combined notions of ''belongingness'' and ''quasicoincidence''
of fuzzy points and fuzzy sets, which was introduced by Pu and Liu
\cite{21}. In fact, the $(\in,\ivq)$-fuzzy subgroup is an
important generalization of Rosenfeld's fuzzy subgroup. Recently,
Davvaz \cite{5} applied this theory to near-rings and obtained
some useful results. Further, Davvaz and Corsini \cite{6}
redefined fuzzy $H_v$-submodule and many valued implications. In
\cite{33}, Zhan et al. also discussed the properties of interval
valued $(\in,\ivq)$-fuzzy hyperideals in hypernear-rings. For more
details, the reader is referred to \cite{5}, \cite{6} and
\cite{33}.

The paper is organized as follows. In section 2, we recall some
basic definitions and results of pseudo $BL$-algebras. In section 3,
we introduce the notion of interval valued $(\in,\ivq)$-fuzzy
filters in pseudo $BL$-algebras and investigate some of their
related properties. Further, the notions of interval valued
$(\in,\ivq)$-fuzzy (implicative, $MV$- and $G$-) of pseudo
$BL$-algebras are introduced and the relationship among these
generalized interval valued fuzzy filters of pseudo $BL$-algebras is
considered in section 4. Finally, in section 5 we consider the
concept of implication-based interval valued fuzzy implicative
filters of pseudo $BL$-algebras, in particular, the implication
operators in Lukasiewicz system of continuous-valued logic are
discussed.

\section{Preliminaries }

A pseudo $BL$-algebra is an algebra
$(A;\wedge,\vee,\odot,\rightarrow,\hookrightarrow,0,1)$ of type
(2,2,2,2,2, 0,0) such that $(A,\wedge,\vee,0,1)$ is a bounded
lattice, $(A,\odot,1)$ is a monoid and the following axioms

\begin{enumerate}
\item[$(a_1)$] \ $ x\odot y\le z\Longleftrightarrow x\le y\rightarrow
z\Longleftrightarrow y\le x \hookrightarrow z;$

\item[$(a_2)$] \ $x\wedge y=(x\rightarrow y)\rightarrow x=x\odot
(x\hookrightarrow y);$

\item[$(a_3)$] \  $(x\rightarrow y)\vee (y\rightarrow
x)=(x\hookrightarrow y)\vee (y\hookrightarrow x)=1$
\end{enumerate}

\noindent are satisfied for all $x,y,z\in A$.

We assume that the operations $\vee$, $\wedge$, $\odot$ have
priority towards the operations $\rightarrow$ and
$\hookrightarrow$.

\begin{example}\label{Ex2.1}\rm (Di Nola \cite{8}) Let
$(G,\vee,\wedge,+,-,0)$ be an arbitrary $l$-group and let $\theta$
be the symbol distinct from the element of $G$. If $G^-=\{x'\in
G\vert x'\le 0\}$, then we define on $G^*=\{\theta\}\cup G^-$  the
following operations:

$$x'\odot y'=\left\{\begin{array}{ll} x'+y' & \mbox{\ \ \ \ \
if\ \ \ \ \ } x',y'\in G^-,\\ \theta & \mbox{\ \ \ \ \
}$otherwsie$,
\end{array}\right.$$

$$x'\rightarrow y'=\left\{\begin{array}{ll} (y'-x')\wedge 0 & \mbox{\ \ \ \ \
if\ \ \ \ \ } x',y'\in G^-,\\ \theta & \mbox{\ \ \ \ \ if\ \ \ \ \
} x'\in G^-, \ y'=\theta,\\ 0 & \mbox{\ \ \ \ \ if\ \ \ \ \ } x'
=\theta,
\end{array}\right.$$

$$x'\hookrightarrow y'=\left\{\begin{array}{ll} (-x'+y')\wedge 0 & \mbox{\ \ \ \ \
if\ \ \ \ \ } x',y'\in G^-,\\ \theta & \mbox{\ \ \ \ \ if\ \ \ \ \
} x'\in G^-, \ y'=\theta,\\ 0 & \mbox{\ \ \ \ \ if\ \ \ \ \ } x'
=\theta.
\end{array}\right.$$

If we put $\theta\le x'$, for any $x'\in G$, then $(G^*,\le)$
becomes a lattice with first element $\theta$, and the last
element $0$. The structure
$G^*=(G^*,\vee,\wedge,\odot,\rightarrow,\hookrightarrow,0=\theta,1=0)$
is a pseudo $BL$-algebra. \end{example}

Let $A$ be a pseudo $BL$-algebra and $x,y,z\in A.$ The following
statements are true (for details see \cite{8, 15, 31}):

\begin{enumerate}
\item[$(1)$] \ $(x\odot y)\rightarrow z=x\rightarrow(y\rightarrow
z)$,

\item[$(2)$] \ $(y\odot x)\hookrightarrow
z=x\hookrightarrow(y\hookrightarrow z)$,

\item[$(3)$] \ $x\le y\Longleftrightarrow x\rightarrow
y=1\Longleftrightarrow x\hookrightarrow y=1$,

\item[$(4)$] \ $(x\hookrightarrow y)\hookrightarrow x\le
(x\hookrightarrow y)\rightarrow ((x\hookrightarrow
y)\hookrightarrow y)$,

\item[$(5)$] \ $x\le y\Rightarrow x\odot z\le y\odot z$,

\item[$(6)$] \ $x\le y\Rightarrow z\odot x\le z\odot y$,

\item[$(7)$] \ $x\odot y\le x$, \ $x\odot y\le y$,

\item[$(8)$] \ $x\odot 0=0\odot x=0$,

\item[$(9)$] \ $1\rightarrow x=1\hookrightarrow x=x$,

\item[$(10)$] \ $y\le x\rightarrow y$.
\end{enumerate}

A non-empty  subset $I$ of a pseudo $BL$-algebra $A$ is called a
{\it filter} of $A$ if it satisfies the following two conditions:

$(i)$ \ $x\odot y\in I$ for all $x,y\in I$;

$(ii)$ \ $x\le y \Longrightarrow y\in I$ for all $x\in I$ and
$y\in
A$.\\

Remind that a filter $I$ is called

{\it implicative } if \ $\left\{\begin{array}{ll}(x\rightarrow
y)\hookrightarrow x\in I \
{\rm implies } \ x\in I,\\
(x\hookrightarrow y)\rightarrow x\in I \ {\rm implies } \ x\in I,
\end{array}\right.$

{\it $MV$-filter} if \ $\left\{\begin{array}{ll} x\rightarrow y\in
I \ {\rm implies } \ ((y\rightarrow
x)\hookrightarrow x)\rightarrow y\in I,\\
x\hookrightarrow y\in I \ {\rm implies } \ ((y\hookrightarrow
x)\rightarrow x)\hookrightarrow y\in I,
\end{array}\right.$

{\it $G$-filter} if \ $\left\{\begin{array}{ll}
x\rightarrow(x\rightarrow y)\in I \ {\rm implies } \ x\rightarrow
y\in I,\\
x\hookrightarrow(x\hookrightarrow y)\in I \ {\rm implies } \
x\hookrightarrow y\in I.
\end{array}\right.$\\

Now, we introduce the concept of fuzzy (implicative) filters of
pseudo $BL$-algebras as follows:

\begin{definition}\label{D2.2}\rm A fuzzy set $\mu$ of a pseudo
$BL$-algebra $A$ is called a {\it fuzzy filter} of $A$ if

$(i)$ \ $\mu(x\odot y)\ge\min\{\mu(x), \mu(y)\}$,

$(ii)$ \ $x\le y\Longrightarrow \mu(x)\le\mu(y)$,

\noindent
is satisfied for all $x,y\in A.$
\end{definition}
\begin{definition}\label{D2.3}\rm A fuzzy filter $\mu$ of a pseudo
$BL$-algebra $A$ is called a {\it fuzzy implicative filter} if

$(iii)$ \ $\mu(x)\ge\max\{\mu((x\rightarrow y)\hookrightarrow x),
\mu((x\hookrightarrow y)\rightarrow x)$,\\
holds for all $x,y,z\in A$.
\end{definition}

For any fuzzy set $\mu$ of $A$ and $t\in (0,1]$, the set
$\mu_t=\{x\in A\,|\,\mu(x)\ge t\}$ is called a {\it level subset}
of $\mu$.

It is not difficult to verify that the following theorem is true.

\begin{theorem}\label{T2.4} A fuzzy set $\mu$ of a pseudo
$BL$-algebra $A$ is a fuzzy filter of $A$ if and only if each its
non-empty level subset is a filter of $A$. \hfill$\Box{}$
\end{theorem}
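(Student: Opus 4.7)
The plan is to prove both directions by translating the pointwise defining conditions of a fuzzy filter into the elementwise conditions on level subsets, and vice versa. The argument is purely unraveling definitions; no structural property of pseudo $BL$-algebras beyond the definition of a filter is needed, so the proof should be short.

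For the forward direction, I will fix $t\in(0,1]$ with $\mu_t\neq\emptyset$ and verify the two closure properties of a filter. If $x,y\in\mu_t$, then $\mu(x)\ge t$ and $\mu(y)\ge t$, so condition (i) of Definition~\ref{D2.2} yields $\mu(x\odot y)\ge\min\{\mu(x),\mu(y)\}\ge t$, giving $x\odot y\in\mu_t$. If $x\in\mu_t$ and $x\le y$, condition (ii) yields $\mu(y)\ge\mu(x)\ge t$, so $y\in\mu_t$. Thus $\mu_t$ is a filter of $A$.

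For the converse, I will assume every non-empty level subset $\mu_t$ is a filter and derive conditions (i) and (ii). Given $x,y\in A$, set $t_0=\min\{\mu(x),\mu(y)\}$. If $t_0>0$, then $\mu_{t_0}$ is non-empty (it contains $x$ and $y$) hence a filter, so $x\odot y\in\mu_{t_0}$, i.e., $\mu(x\odot y)\ge t_0=\min\{\mu(x),\mu(y)\}$; the case $t_0=0$ is trivial since $\mu$ is $[0,1]$-valued. Similarly, given $x\le y$, set $s=\mu(x)$: if $s>0$, then $x\in\mu_s$, which is a filter, forcing $y\in\mu_s$, so $\mu(y)\ge s=\mu(x)$; if $s=0$, the inequality holds trivially.

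There is no real obstacle here, and the only subtle point is handling the edge case where $\mu$ takes the value $0$, which I have absorbed into the trivial cases above. The proof is thus essentially mechanical, which is why the author calls the theorem straightforward to verify.
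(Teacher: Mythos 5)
Your proof is correct; the paper omits the argument entirely (stating only that the theorem ``is not difficult to verify''), and your mechanical unraveling of the definitions, including the careful handling of the case $\min\{\mu(x),\mu(y)\}=0$ where no level subset is available, is exactly the standard argument the authors intend. No issues.
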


\paragraph {}
By an {\it interval number } $\widehat{a},$ we mean  an interval
$[a^{\bot},a^{\top}]$, where $0\le a^{\bot}\le a^{\top} \le 1$. The
set of all interval numbers is denoted by $D[0,1]$. The interval
$[a,a]$ can be simply identified with the number $a\in [0,1]$.

For the interval numbers $\widehat{a}_i=[a_i^{\bot},a_i^{\top}]$,
$\widehat{b}_i=[b_i^{\bot},b_i^{\top}] \in D[0,1]$, $i\in I$, we
define
\[
 \mathrm{rmax}\{\widehat{a}_i,\widehat{b}_i\}=[\max\{
a_i^{\bot},b_i^{\bot}\},\max\{a_i^{\top},b_i^{\top}\}],
\]
\[
\mathrm{rmin}\{\widehat{a}_i,\widehat{b}_i\}=[\min\{
a_i^{\bot},b_i^{\bot}\}, \min\{a_i^{\top},b_i^{\top}\}],
\]
\[
\mathrm{rinf}\,\widehat{a}_i=[\bigwedge\limits_{i\in I}
a_i^{\bot}, \bigwedge\limits_{i\in I} a_i^{\top}], \ \ \ \ \ \
\mathrm{rsup}\,\widehat{a}_i=[\bigvee\limits_{i\in I} a_i^{\bot},
\bigvee\limits_{i\in I} a_i^{\top}]
\]
and put

\begin{enumerate}
\item[(1)] \ $\widehat{a}_1\le \widehat{a}_2\Longleftrightarrow
a_1^{\bot}\le a_2^{\bot}$ and $a_1^{\top}\le a_2^{\top}$,

\item[(2)] \ $\widehat{a}_1=\widehat{a}_2\Longleftrightarrow  a_1^{\bot}=a_2^{\bot}$ and
$a_1^{\top}=a_2^{\top}$,

\item[(3)] \ $\widehat{a}_1<\widehat{a}_2\Longleftrightarrow
\widehat{a}_1\le \widehat{a}_2$ and $\widehat{a}_1\ne
\widehat{a}_2$,

\item[(4)] \ $k\widehat{a}=[ka^{\bot},ka^{\top}]$, whenever $ 0\le k\le 1$.
\end{enumerate}
Then, it is clear that $(D[0,1],\le,\vee,\wedge)$ is a complete
lattice with $0=[0,0]$ as its least element and $1=[1,1]$ as its
greatest element.

\medskip
The interval valued fuzzy sets provide a more adequate description
of uncertainty than the traditional fuzzy sets; it is therefore
important to use interval valued fuzzy sets in applications. One
of the main applications of fuzzy sets is fuzzy control, and one
of the most computationally intensive part of fuzzy control is the
''defuzzification''. Since a transition to interval valued fuzzy
sets usually increase the amount of computations, it is vitally
important to design faster algorithms for the corresponding
defuzzification. For more details, the reader can find some good
examples in \cite{7} and \cite{30}.

Recall that an {\it interval valued fuzzy set} $F$ on $X$ is the
set
 \[
F=\{(x, [\mu^{\bot}_F(x),\mu^{\top}_F(x)])\,|\,x\in X\},
\]
where $\mu^{\bot}_F $ and $\mu^{\top}_F$ are two fuzzy subsets of
$X$ such that $\mu^{\bot}_F(x)\le \mu^{\top}_F(x)$ for all $x\in
X.$ Putting
$\widehat{\mu_F}(x)=[\mu^{\bot}_F(x),\mu^{\top}_F(x)]$, we see
that $F=\{(x,\widehat{\mu_F}(x))\,|\,x\in X\}$, where
$\widehat{\mu_F}: X\rightarrow D[0,1].$

If $A$, $B$ are two interval valued fuzzy sets of $X$, then we
define

\smallskip
$A\subseteq B$ if and only if for all $x\in X$,
$\mu_A^{\bot}(x)\le \mu_B^{\bot}(x)$ and $\mu_A^{\top}(x)\le
\mu_B^{\top}(x)$,

\smallskip
$A=B$ if and only if for all $x\in X$, $\mu_A^{\bot}(x)=
\mu_B^{\bot}(x)$ and $\mu_A^{\top}(x)= \mu_B^{\top}(x)$.

\smallskip
Also, the union, intersection and complement are defined as follows:

\smallskip
$A\cup
B=\{(x,[\max\{\mu_A^{\bot}(x),\mu_B^{\bot}(x)\},\max\{\mu_A^{\top}(x),\mu_B^{\top}(x)\}]
\,|\,x\in X\},$

\smallskip
$A\cap
B=\{(x,[\min\{\mu_A^{\bot}(x),\mu_B^{\bot}(x)\},\min\{\mu_A^{\top}(x),\mu_B^{\top}(x)\}]
\,|\,x\in X\},$

\smallskip
$A^c=\{(x,[1-\mu_A^{\top}(x), 1-\mu_A^{\bot}(x)])\vert x\in X\}$,

\smallskip\noindent where $A^c$ is the {\it complement of interval valued
fuzzy set} $A$ in $X$.

\section{Interval valued $(\in,\ivq)$-fuzzy  filters}

Based on the results of \cite{1} and \cite{2}, we can extend the
concept of quasi-coincidence of fuzzy point within a fuzzy set to
the concept of quasi-coincidence of a fuzzy interval value with an
interval valued fuzzy set.

An interval valued fuzzy  set $F$ of a pseudo $BL$-algebra $A$ of
the form

$$\widehat{\mu_F}(y)=\left\{\begin{array}{ll} \widehat{\,t}\ne [0,0] & \mbox{\ \ \ \ \
if\ \ \ \ \ } y=x,\\ $[0,0]$ & \mbox{\ \ \ \ \ if\ \ \ \ \ } y\ne x,
\end{array}\right.$$
is said to be a {\it fuzzy interval value  with support $x$ and
interval value} $\widehat{\,t}$ and is denoted by
$U(x;\widehat{\,t})$. We say that a fuzzy interval value
$U(x;\widehat{\,t})$ {\it belongs to }(or resp. is {\it
quasi-coincident with}) an interval valued fuzzy set $F$, written
by $U(x;\widehat{\,t})\in F$ (resp. $U(x;\widehat{\,t}) q F$) if
$\widehat{\mu_F}(x)\ge \widehat{\,t}$ (resp.
$\widehat{\mu_F}(x)+\widehat{\,t}>[1,1])$. If
$U(x;\widehat{\,t})\in F$ or $U(x;\widehat{\,t})q F$, then we
write $U(x;\widehat{\,t})\ivq$. If $U(x;\widehat{\,t})\in F$ and
$U(x;\widehat{\,t})q F$, then we write
$U(x;\widehat{\,t})\in\wedge qF$. The symbol $\overline{\ivq}$
means that $\ivq$ does not hold.

In what follows, $A$ is a pseudo $BL$-algebra unless otherwise
specified. We emphasis that
$\widehat{\mu_F}(x)=[\mu^{\bot}_F(x),\mu^{\top}_F(x)]$ must
satisfy the following properties:
\medskip

$[\mu^{\bot}_F(x),\mu^{\top}_F(x)]<[0.5,0.5]$ or $[0.5,0.5]\le
[\mu^{\bot}_F(x),\mu^{\top}_F(x)]$, for all $x\in A$.
\medskip

First, we can extend  the  concept of fuzzy   filters to the
concept of interval valued fuzzy  filters of $A$  as follows:

\begin{definition}\rm  An interval valued  fuzzy set $F$  of $A$ is said to be an
{\it interval valued  fuzzy filter} of $A$ if  the following two
conditions hold:

$(F_1)$ $ \widehat{\mu_F}(x\odot y)  \ge
\mathrm{rmin}\{\widehat{\mu_F}(x),
  \widehat{\mu_F}(y)\}$ \ $\forall x,y\in A, $

$(F_2)$  $  x\le y\Rightarrow \widehat{\mu_F}(x)\le
\widehat{\mu_F}(y)$ \ $\forall x,y\in A.$
\end{definition}

Let $F$ be an interval valued fuzzy set.  Then, for every $t\in
(0,1]$, the set $F_{\widehat{\,t}}=\{x\in A\,|\,
\widehat{\mu_F}(x)\ge \widehat{\,t}\}$ is called the {\it level
subset of $F$}.

Now, we characterize the interval valued fuzzy  filters by using
their level  filters.

\begin{theorem}\label{T3.2} {\it An interval valued  fuzzy set  $F$ of $A$
is an interval valued  fuzzy filter of $A$ if and only if for any
$[0,0]<\widehat{\,t}\le [1,1]$ each non-empty $F_{\widehat{\,t}}$
is a filter of $A$.}
\end{theorem}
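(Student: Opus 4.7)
My plan is to imitate the standard level-set characterization already recorded for ordinary fuzzy filters in Theorem~\ref{T2.4}, lifting every inequality from $[0,1]$ to the lattice $(D[0,1],\le)$ using the componentwise order and the operation $\mathrm{rmin}$. The equivalence splits into two routine directions; the only real care needed is to handle the degenerate value $\widehat{t}=[0,0]$, which is excluded from the hypothesis on the right-hand side.

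For the forward direction, I would assume $F$ satisfies $(F_1)$ and $(F_2)$, fix an interval number $[0,0]<\widehat{t}\le[1,1]$ with $F_{\widehat{t}}\ne\emptyset$, and verify the two filter axioms. If $x,y\in F_{\widehat{t}}$, then $\widehat{\mu_F}(x)\ge\widehat{t}$ and $\widehat{\mu_F}(y)\ge\widehat{t}$; applying $(F_1)$ and the obvious monotonicity of $\mathrm{rmin}$ gives $\widehat{\mu_F}(x\odot y)\ge\mathrm{rmin}\{\widehat{\mu_F}(x),\widehat{\mu_F}(y)\}\ge\widehat{t}$, so $x\odot y\in F_{\widehat{t}}$. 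If $x\in F_{\widehat{t}}$ and $x\le y$ in $A$, then $(F_2)$ yields $\widehat{\mu_F}(y)\ge\widehat{\mu_F}(x)\ge\widehat{t}$, so $y\in F_{\widehat{t}}$. Thus $F_{\widehat{t}}$ is a filter.

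For the converse, suppose every non-empty level subset is a filter. To verify $(F_1)$, fix $x,y\in A$ and set $\widehat{t}=\mathrm{rmin}\{\widehat{\mu_F}(x),\widehat{\mu_F}(y)\}$. If $\widehat{t}=[0,0]$ the required inequality $\widehat{\mu_F}(x\odot y)\ge[0,0]$ is automatic. Otherwise $[0,0]<\widehat{t}\le[1,1]$ and both $x,y\in F_{\widehat{t}}$, so the hypothesis gives $x\odot y\in F_{\widehat{t}}$, i.e.\ $\widehat{\mu_F}(x\odot y)\ge\widehat{t}=\mathrm{rmin}\{\widehat{\mu_F}(x),\widehat{\mu_F}(y)\}$. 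For $(F_2)$, suppose $x\le y$ in $A$ and put $\widehat{s}=\widehat{\mu_F}(x)$; again the case $\widehat{s}=[0,0]$ is trivial, and when $\widehat{s}>[0,0]$ we have $x\in F_{\widehat{s}}\ne\emptyset$, whence the filter property forces $y\in F_{\widehat{s}}$, i.e.\ $\widehat{\mu_F}(y)\ge\widehat{\mu_F}(x)$.

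No step poses a real obstacle; the only place one has to be mildly careful is in the converse, where the choice $\widehat{t}=\mathrm{rmin}\{\widehat{\mu_F}(x),\widehat{\mu_F}(y)\}$ (or $\widehat{s}=\widehat{\mu_F}(x)$) could fall outside the allowed range $[0,0]<\widehat{t}\le[1,1]$. Separating off the degenerate case $\widehat{t}=[0,0]$ at the start of each verification handles this cleanly, and the rest is a direct translation of the proof of Theorem~\ref{T2.4} into the interval-valued setting.
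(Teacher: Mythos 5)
Your proof is correct and is exactly the standard level-set argument that the paper invokes by reference to Theorem~\ref{T2.4}; the forward direction via monotonicity of $\mathrm{rmin}$ and the converse via the choices $\widehat{t}=\mathrm{rmin}\{\widehat{\mu_F}(x),\widehat{\mu_F}(y)\}$ and $\widehat{s}=\widehat{\mu_F}(x)$ are precisely what is intended. Your explicit handling of the degenerate case $\widehat{t}=[0,0]$ (which can occur even when only one component vanishes, e.g.\ $\widehat{t}=[0,0.3]$ is still admissible while $[0,0]$ is not) is a sensible extra care the paper leaves implicit.
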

\begin{proof} The proof is similar to Theorem
\ref{T2.4}. \end{proof}

Further, we define the following concept:

\begin{definition}\rm An interval valued   fuzzy set $F$ of $A$ is said
to be an {\it  interval valued   $(\in,\ivq)$-fuzzy filter} of $A$
if for all $t,r\in (0,1]$ and $x,y\in A$,

\smallskip
$(F_3)$ \ $U(x; \widehat{\,t}\,)\in F$ and $U(y;\widehat{\,r} )\in
F$
 imply $U(x\odot y;\mathrm{ rmin}\{\widehat{\,t},\widehat{\,r}\})\ivq F$,

\smallskip
$(F_4)$ \ $U(x;\widehat{\,r})\in F$ implies
$U(y;\widehat{\,r})\ivq F$ with $x\le y$.
\end{definition}

\begin{example}\label{Ex3.4}\rm Let $I$ be a filter of a pseudo
$BL$-algebra $A$ and let $F$ be an interval valued fuzzy set in
$A$ defined by
$$
\widehat{\mu_F}(x)=\left\{\begin{array}{lllll} $[0.7,0.8]$ &
\mbox{if } x\in I,\\
$[0.3,0.4]$ & \mbox{otherwsie}.
\end{array}\right.
$$
It is easily to verify that $F$ is an interval valued
$(\in,\ivq)$-fuzzy filter of $A$.
\end{example}

\begin{theorem}\label{T3.5} An interval valued fuzzy set $F$ of $A$ is an interval valued
$(\in,\ivq)$-fuzzy filter if and only if for all $x,y\in A$ the
following two conditions are satisfied:

\smallskip
$(F_5)$ \ $\widehat{\mu_F}(x\odot y)\ge
\mathrm{rmin}\{\widehat{\mu_F}(x),\widehat{\mu_F}(y),\,0.5\}$,

\smallskip $(F_6)$ \ $x\le y\Longrightarrow
\widehat{\mu_F}(y)\ge\mathrm{rmin}\{\widehat{\mu_F}(x),\,0.5\}.$
\end{theorem}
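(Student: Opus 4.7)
The plan is to prove the equivalence by handling each direction separately, using the interval $[0.5,0.5]$ as the watershed between the $\in$ and $q$ regimes. The argument is a standard adaptation of the Bhakat--Das technique to the interval-valued setting, but one must be careful with the partial order on $D[0,1]$.

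For the forward direction $(F_3),(F_4)\Rightarrow(F_5),(F_6)$, I would argue by contrapositive. Suppose $(F_5)$ fails at some $x,y\in A$, i.e.\ $\widehat{\mu_F}(x\odot y)<\mathrm{rmin}\{\widehat{\mu_F}(x),\widehat{\mu_F}(y),\,0.5\}$. I then select an interval number $\widehat{\,t}$ squeezed between them, so that $\widehat{\mu_F}(x\odot y)<\widehat{\,t}\le\mathrm{rmin}\{\widehat{\mu_F}(x),\widehat{\mu_F}(y),\,0.5\}$. By construction $U(x;\widehat{\,t})\in F$ and $U(y;\widehat{\,t})\in F$, so $(F_3)$ would force $U(x\odot y;\widehat{\,t})\ivq F$. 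However, $\widehat{\mu_F}(x\odot y)<\widehat{\,t}$ rules out belongingness, and the bound $\widehat{\,t}\le[0.5,0.5]$ yields $\widehat{\mu_F}(x\odot y)+\widehat{\,t}<2\widehat{\,t}\le[1,1]$, which rules out quasi-coincidence, a contradiction. An entirely parallel argument derives $(F_6)$ from $(F_4)$, replacing $x\odot y$ by $y$ and $\mathrm{rmin}\{\widehat{\mu_F}(x),\widehat{\mu_F}(y),0.5\}$ by $\mathrm{rmin}\{\widehat{\mu_F}(x),0.5\}$.

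For the converse, I would start from $U(x;\widehat{\,t})\in F$ and $U(y;\widehat{\,r})\in F$, i.e.\ $\widehat{\mu_F}(x)\ge\widehat{\,t}$ and $\widehat{\mu_F}(y)\ge\widehat{\,r}$, and apply $(F_5)$ to obtain $\widehat{\mu_F}(x\odot y)\ge\mathrm{rmin}\{\widehat{\,t},\widehat{\,r},\,0.5\}$. A two-case split finishes the job: if $\mathrm{rmin}\{\widehat{\,t},\widehat{\,r}\}\le[0.5,0.5]$, then $\widehat{\mu_F}(x\odot y)\ge\mathrm{rmin}\{\widehat{\,t},\widehat{\,r}\}$, giving $\in$ directly; if $\mathrm{rmin}\{\widehat{\,t},\widehat{\,r}\}>[0.5,0.5]$, then $\widehat{\mu_F}(x\odot y)\ge[0.5,0.5]$, so $\widehat{\mu_F}(x\odot y)+\mathrm{rmin}\{\widehat{\,t},\widehat{\,r}\}>[1,1]$, giving $q$. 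The derivation of $(F_4)$ from $(F_6)$ is analogous.

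The main obstacle is the interval arithmetic. Since the order on $D[0,1]$ is only partial, two intervals need not be comparable, so a priori neither the extraction of an intermediate $\widehat{\,t}$ in the forward direction nor the dichotomy $\mathrm{rmin}\{\widehat{\,t},\widehat{\,r}\}\le[0.5,0.5]$ versus $>[0.5,0.5]$ in the converse is automatic. This is precisely where the paper's standing restriction that $\widehat{\mu_F}(x)<[0.5,0.5]$ or $[0.5,0.5]\le\widehat{\mu_F}(x)$ for every $x\in A$ enters: it guarantees that the relevant values of $\widehat{\mu_F}$ (and hence the $\widehat{\,t},\widehat{\,r}$ produced from them via the hypothesis $U(\,\cdot\,;\widehat{\,t})\in F$) are always comparable to $[0.5,0.5]$, so both the intermediate interval and the case split are well defined. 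I would make this comparability step explicit rather than tacit, as it is the one nontrivial ingredient beyond a routine transcription of the real-valued proof.
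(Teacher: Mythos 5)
Your argument is correct and follows essentially the same route as the paper's proof: each equivalence is established separately, the forward direction by producing a witness interval value $\widehat{\,t}$ that violates $(F_3)$ or $(F_4)$, and the converse by the case split on whether $\mathrm{rmin}\{\widehat{\,t},\widehat{\,r}\}$ exceeds $[0.5,0.5]$. Your single choice of $\widehat{\,t}$ with $\widehat{\mu_F}(x\odot y)<\widehat{\,t}\le\mathrm{rmin}\{\widehat{\mu_F}(x),\widehat{\mu_F}(y),[0.5,0.5]\}$ merely merges the paper's two subcases, and your explicit attention to the partial order on $D[0,1]$ is, if anything, more careful than the original.
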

\begin{proof} At first we prove that the conditions $(F_3)$ and
$(F_5)$ are equivalent.

Suppose that $(F_3)$ do not implies $(F_5)$, i.e., $(F_3)$ holds
but $(F_5)$ is not satisfied. In this case there are $x,y\in A$
such that
$$
\widehat{\mu_F}(x\odot y)<
\mathrm{rmin}\{\widehat{\mu_F}(x),\widehat{\mu_F}(y), [0.5,0.5]\}.
$$
If $\mathrm{rmin}\{\widehat{\mu_F}(x),\widehat{\mu_F}(y)\}<[0.5,
0.5]$, then $\widehat{\mu_F}(x\odot y)<
\mathrm{rmin}\{\widehat{\mu_F}(x),\widehat{\mu_F}(y)\}$. This
means that for some $t $ satisfying the condition
$\widehat{\mu_F}(x\odot y)<\widehat{\,t}<
\mathrm{rmin}\{\widehat{\mu_F}(x),\widehat{\mu_F}(y)\}$, we have
$U(x;\widehat{\,t}\,)\in F$ and $U(y;\widehat{\,t}\,)\in F$, but
$U(x\odot y;\widehat{\,t}\,)\overline{\ivq} F$, which contradicts
to $(F_3)$. So, this case is impossible. Therefore
$\mathrm{rmin}\{\widehat{\mu_F}(x),\widehat{\mu_F}(y)\}\geq
[0.5,0.5]$. In this case $\widehat{\mu_F}(x\odot y)<[0.5,0.5]$,
$U(x;[0.5,0.5])\in F$, $U(y;[0.5,0.5])\in F$ and $U(x\odot y;[0.5,
0.5 ])\overline{\ivq}F$, which also is impossible. So, $(F_3)$
implies $(F_5)$.

Conversely, if $(F_5)$ holds and $U(x;\widehat{\,t}\,)\in F$,
$U(y;\widehat{r})\in F$, then $\widehat{\mu_F}(x)\ge
\widehat{\,t}$, $\widehat{\mu_F}(y)\ge \widehat{r}$ and
$\widehat{\mu_F}(x\odot y)\ge
\mathrm{rmin}\{\widehat{\,t},\widehat{r},[0.5,0.5]\}$. If
$\mathrm{rmin}\{\widehat{\,t},\widehat{r}\}>[0.5,0.5]$, then
$\widehat{\mu_F}(x\odot y)\ge [0.5,0.5]$, which implies $F(x\odot
y)+\mathrm{rmin}\{\widehat{\,t},\widehat{r}\,\}>[1,1]$, i.e.,
$U(x\odot y;\mathrm{rmin}\{\widehat{\,t},\widehat{r}\,\})q F$. If
$\mathrm{rmin}\{\widehat{\,t},\widehat{r}\,\}\le [0.5,0.5]$, then
$\widehat{\mu_F}(x\odot y)\ge
\mathrm{rmin}\{\widehat{\,t},\widehat{r}\,\}$. Thus $U(x\odot
y;\mathrm{rmin}\{\widehat{\,t},\widehat{r}\,\})\in F$. Therefore,
$U(x\oplus y;\mathrm{rmin}\{\widehat{\,t},\widehat{r}\,\})\ivq F$.
Summarizing, $(F_5)$ implies $(F_3)$. So $(F_3)$ and $(F_5)$ are
equivalent.

To prove that $(F_4)$ and $(F_6)$ are equivalent suppose that
$(F_6)$ is not satisfied, i.e.,
$\widehat{\mu_F}(y_0)<\mathrm{rmin}\{\widehat{\mu_F}(x_0),\,[0.5,0.5]\}$
for some $x_0\le y_0$. If $\widehat{\mu_F}(x_0)<[0.5,0.5]$, then
$\widehat{\mu_F}(y_0)<\widehat{\mu_F}(x_0)$, which means that
there exists $s $ such that
$\widehat{\mu_F}(y_0)<\widehat{s}<\widehat{\mu_F}(x_0)$ and
$\widehat{\mu_F}(y_0)+\widehat{\mu_F}(x_0)<[1,1]$. Thus
$U(y_0;\widehat{\,s}\,)\in F$ and
$U(x_0;\widehat{\,s}\,)\overline{\ivq}F$, which contradicts to
$(F_4)$. So, $\widehat{\mu_F}(x_0)\geq [0.5,0.5]$. But in this
case
$\widehat{\mu_F}(y_0)<\mathrm{rmin}\{\widehat{\mu_F}(x_0),\,[0.5,0.5]\}$
gives $U(x_0;[0.5,0.5])\in F$ and
$U(x_0;[0.5,0.5])\overline{\in\vee q}F$, which contradicts to
$(F_4)$. Hence
$\widehat{\mu_F}(y)\geq\mathrm{rmin}\{\widehat{\mu_F}(x),\,[0.5,0.5]\}$
for all $x\leq y$, i.e., $(F_4)$ implies $(F_6)$.

Conversely, if $(F_6)$ holds, then $x\le y$ and
$U(x;\widehat{\,t}\,)\in F$ imply $\widehat{\mu_F}(x)\ge
\widehat{\,t}$, and so
$\widehat{\mu_F}(y)\ge\mathrm{rmin}\{\widehat{\mu_F}(x),\,[0.5,0.5]\}\ge
\mathrm{rmin}\{\widehat{\,t},[0.5,0.5]\}$. Thus
$\widehat{\mu_F}(y)\ge \widehat{\,t}$ or $\widehat{\mu_F}(y)\ge
[0.5,0.5],$ according to $\widehat{\,t}\le [0.5,0.5]$ or
$\widehat{\,t}>[0.5,0.5]$. Therefore, $U(y;\widehat{\,t}\,)\ivq
F$. Hence $(F_6)$ implies $(F_4)$.
\end{proof}

\begin{proposition}\label{P3.6} An interval valued fuzzy set $F$ of $A$ is an interval valued  $(\in,\ivq)$-fuzzy
filter if and only if

\smallskip
$(F_7)$ \ $\widehat{\mu_F}(1)\ge
\mathrm{rmin}\{\widehat{\mu_F}(x),\,[0.5,0.5]\}$ holds for all
$x\in A$

\smallskip\noindent and one of the conditions:

\smallskip
$(F_8)$ \
$\widehat{\mu_F}(y)\ge\mathrm{rmin}\{\widehat{\mu_F}(x),\,\widehat{\mu_F}(x\rightarrow
y),\,[0.5,0.5]\}$,

\smallskip
$(F_8^{\prime})$ \  $\widehat{\mu_F}(y)\ge \mathrm{rmin}\{\widehat{\mu_F}(x),\,\widehat{\mu_F}(x\hookrightarrow y),
[0.5,0.5]\}$

\smallskip\noindent is satisfied for all $x,y\in A$.
\end{proposition}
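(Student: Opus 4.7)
The plan is to reduce everything, via Theorem \ref{T3.5}, to showing that the pair $(F_5)$, $(F_6)$ is equivalent to $(F_7)$ together with either $(F_8)$ or $(F_8')$. The two workhorses throughout will be the adjunction $(a_1)$ and property $(3)$, namely $x\le y\Leftrightarrow x\rightarrow y=1\Leftrightarrow x\hookrightarrow y=1$.

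For the forward direction, assume $F$ is an interval valued $(\in,\ivq)$-fuzzy filter, so $(F_5)$ and $(F_6)$ hold. Since $x\le 1$ for every $x\in A$, applying $(F_6)$ with $y=1$ gives $(F_7)$ at once. For $(F_8)$, apply $(a_1)$ to the trivial inequality $x\rightarrow y\le x\rightarrow y$ to obtain $(x\rightarrow y)\odot x\le y$; then $(F_5)$ yields $\widehat{\mu_F}((x\rightarrow y)\odot x)\ge \mathrm{rmin}\{\widehat{\mu_F}(x\rightarrow y),\widehat{\mu_F}(x),[0.5,0.5]\}$, and $(F_6)$ transfers this lower bound to $\widehat{\mu_F}(y)$, absorbing the extra $[0.5,0.5]$ via idempotence of $\mathrm{rmin}$. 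The dual computation, starting from $x\hookrightarrow y\le x\hookrightarrow y$ and producing $x\odot(x\hookrightarrow y)\le y$ via $(a_2)$ (or $(a_1)$), yields $(F_8')$.

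For the converse, I will treat the case $(F_7)+(F_8)$ in detail; the case $(F_7)+(F_8')$ is handled symmetrically using the second half of $(a_1)$. First I recover $(F_6)$: if $x\le y$, then by $(3)$ we have $x\rightarrow y=1$, so $(F_8)$ combined with $(F_7)$ gives
\[
\widehat{\mu_F}(y)\ge\mathrm{rmin}\{\widehat{\mu_F}(x),\widehat{\mu_F}(1),[0.5,0.5]\}\ge\mathrm{rmin}\{\widehat{\mu_F}(x),[0.5,0.5]\}.
\]
Next I recover $(F_5)$: applying $(a_1)$ to the reflexive inequality $x\odot y\le x\odot y$ produces $x\le y\rightarrow(x\odot y)$, so the freshly proved $(F_6)$ yields $\widehat{\mu_F}(y\rightarrow(x\odot y))\ge\mathrm{rmin}\{\widehat{\mu_F}(x),[0.5,0.5]\}$. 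Feeding this into $(F_8)$ (with $x,y$ replaced by $y,x\odot y$) gives
\[
\widehat{\mu_F}(x\odot y)\ge\mathrm{rmin}\{\widehat{\mu_F}(y),\widehat{\mu_F}(y\rightarrow(x\odot y)),[0.5,0.5]\}\ge\mathrm{rmin}\{\widehat{\mu_F}(x),\widehat{\mu_F}(y),[0.5,0.5]\},
\]
which is $(F_5)$. For the $(F_8')$ variant, the same strategy works with $y\le x\hookrightarrow(x\odot y)$ in place of $x\le y\rightarrow(x\odot y)$, and with property $(3)$'s second equivalence used to derive $(F_6)$.

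The only non-routine step is the derivation of $(F_5)$ in the converse direction; the trick is recognizing that applying $(a_1)$ to the reflexive inequality $x\odot y\le x\odot y$ supplies exactly the bridge $x\le y\rightarrow(x\odot y)$ (respectively $y\le x\hookrightarrow(x\odot y)$) that lets $(F_6)$ feed into $(F_8)$ (respectively $(F_8')$). Everything else is bookkeeping with $\mathrm{rmin}$ and the capping value $[0.5,0.5]$.
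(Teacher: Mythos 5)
Your proof is correct. Note that the paper does not actually supply a proof here — it merely points to the analogous Proposition 4.7 in Di Nola--Georgescu--Iorgulescu's paper on pseudo $BL$-algebras — so your argument fills a gap rather than duplicating one; it is the standard reduction via Theorem \ref{T3.5}, with the residuation $(a_1)$ giving $(x\rightarrow y)\odot x\le y$ (resp.\ $x\odot(x\hookrightarrow y)\le y$) in one direction and $x\le y\rightarrow(x\odot y)$ (resp.\ $y\le x\hookrightarrow(x\odot y)$) in the other, and all the steps check out.
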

\begin{proof} The proof is similar to the proof of Proposition 4.7
from the first part of \cite{8}, so we omit it.
\end{proof}

Now, we characterize the interval valued   $(\in,\ivq )$-fuzzy
filters by using their level subsets.

\begin{theorem}\label{T3.7} An interval valued   fuzzy set $F$ of $A$ is an
interval valued $(\in,\ivq)$-fuzzy filter if and only if for all
$[0,0]<\widehat{\,t}\le [0.5,0.5]$ all nonempty level subsets
$F_{\widehat{t}}$ are filters of $A$.
\end{theorem}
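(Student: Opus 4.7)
The plan is to reduce to the pointwise characterization provided by Theorem \ref{T3.5}, and then mimic the standard ``level cut'' argument used in Theorem \ref{T3.2}, the only subtlety being the restriction of $\widehat{t}$ to the range $([0,0],[0.5,0.5]]$. In view of Theorem \ref{T3.5} it suffices to show that each nonempty $F_{\widehat{t}}$ (for $\widehat{t}$ in this range) is a filter if and only if $F$ satisfies $(F_5)$ and $(F_6)$.

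For the ``only if'' direction, assume $F$ satisfies $(F_5)$ and $(F_6)$. Fix $\widehat{t}$ with $[0,0]<\widehat{t}\le[0.5,0.5]$ and a nonempty $F_{\widehat{t}}$. For any $x,y\in F_{\widehat{t}}$ we have $\widehat{\mu_F}(x),\widehat{\mu_F}(y)\ge\widehat{t}$, so $(F_5)$ yields $\widehat{\mu_F}(x\odot y)\ge\mathrm{rmin}\{\widehat{t},\widehat{t},[0.5,0.5]\}=\widehat{t}$, the last equality using $\widehat{t}\le[0.5,0.5]$. Thus $x\odot y\in F_{\widehat{t}}$. Likewise, if $x\in F_{\widehat{t}}$ and $x\le y$, then by $(F_6)$ we get $\widehat{\mu_F}(y)\ge\mathrm{rmin}\{\widehat{\mu_F}(x),[0.5,0.5]\}\ge\mathrm{rmin}\{\widehat{t},[0.5,0.5]\}=\widehat{t}$, so $y\in F_{\widehat{t}}$. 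Hence $F_{\widehat{t}}$ is a filter.

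For the converse, I would argue by contradiction, verifying $(F_5)$ and $(F_6)$ separately and then invoking Theorem \ref{T3.5}. If $(F_5)$ failed at some $x_0,y_0$, put $\widehat{t}_0=\mathrm{rmin}\{\widehat{\mu_F}(x_0),\widehat{\mu_F}(y_0),[0.5,0.5]\}$; by construction $\widehat{t}_0\le[0.5,0.5]$, and the strict inequality $\widehat{\mu_F}(x_0\odot y_0)<\widehat{t}_0$ together with nonnegativity of $\widehat{\mu_F}$ forces $\widehat{t}_0>[0,0]$. Then $x_0,y_0\in F_{\widehat{t}_0}$ (so the level set is nonempty), yet $x_0\odot y_0\notin F_{\widehat{t}_0}$, contradicting that $F_{\widehat{t}_0}$ is a filter. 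The failure of $(F_6)$ for some pair $x_0\le y_0$ is handled identically with $\widehat{t}_0=\mathrm{rmin}\{\widehat{\mu_F}(x_0),[0.5,0.5]\}$, producing $x_0\in F_{\widehat{t}_0}$ but $y_0\notin F_{\widehat{t}_0}$.

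The only non-routine point is ensuring that the $\widehat{t}_0$ used in the converse lies in the permitted range. The upper bound $\widehat{t}_0\le[0.5,0.5]$ is immediate from the $\mathrm{rmin}$ construction, and the lower bound $\widehat{t}_0>[0,0]$ follows because a strict inequality $\widehat{\mu_F}(\cdot)<\widehat{t}_0$ cannot hold when $\widehat{t}_0=[0,0]$. Beyond this bookkeeping the argument parallels Theorem \ref{T3.2}, so I do not expect any serious difficulty.
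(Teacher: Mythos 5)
Your proposal is correct and follows essentially the same route as the paper: the forward direction is the identical computation using $\mathrm{rmin}\{\widehat{\,t},[0.5,0.5]\}=\widehat{\,t}$, and your contrapositive converse rests on the same choice $\widehat{t}_0=\mathrm{rmin}\{\widehat{\mu_F}(x),\widehat{\mu_F}(y),[0.5,0.5]\}$ (resp.\ $\mathrm{rmin}\{\widehat{\mu_F}(x),[0.5,0.5]\}$) that the paper applies directly, both reductions passing through the $(F_5)$/$(F_6)$ characterization of Theorem \ref{T3.5}. Your explicit verification that $\widehat{t}_0>[0,0]$, so that $F_{\widehat{t}_0}$ lies in the permitted range of level sets, is a small edge case the paper's proof passes over silently, but it is bookkeeping rather than a different argument.
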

\begin{proof} Let $F$ be an interval valued $(\in,\ivq)$-fuzzy
filter of $A$ and $ [0,0] <\widehat{\,t}\le [0.5,0.5]$. If $x,y\in
F_{\widehat{t}}$, then $\widehat{\mu_F}(x)\ge \widehat{\,t}$ and
$\widehat{\mu_F}(y)\ge \widehat{\,t}$. Now we have
$\widehat{\mu_F}(x\odot y) \ge \mathrm{ rmin}\{\widehat{\mu_F}(x),
\widehat{\mu_F}(y),[0.5,0.5]\}$ $\ge \mathrm{rmin}\{\widehat{\,t},
[0.5, 0.5]\}=\widehat{\,t}$. This means that $x\odot y\in
F_{\widehat{t}}$.  Let $x,y\in A$ be such that $x\le y$. If $x\in
F_{\widehat{t}}$, then, by $(F_4)$, we have $\widehat{\mu_F}(y)\ge
\mathrm{ rmin}\{\widehat{\mu_F}(x),[0.5,0.5]\}\ge \mathrm{
rmin}\{\widehat{\,t}, [0.5,0.5]\}=\widehat{\,t}$, which implies
$y\in F_{\widehat{t}}$. Hence, $F_{\widehat{t}}$ is a filter of
$A$.

Conversely, let $F$ be an interval valued fuzzy set of $A$ such
that all nonempty $F_{\widehat{t}}$, where $ [0,0]
<\widehat{\,t}\le [0.5,0.5]$, are filters of $A$. Then, for every
$x,y\in A$, we have

$\widehat{\mu_F}(x)\ge \mathrm{ rmin}\{\widehat{\mu_F}(x),
\widehat{\mu_F}(y),[0.5,0.5]\} = \widehat{t}_0$,

\smallskip
$\widehat{\mu_F}(y)\ge \mathrm{
rmin}\{\widehat{\mu_F}(x),\widehat{\mu_F}(y),[0.5,0.5]\}=
\widehat{t}_0 $.

\smallskip\noindent
Thus, $x,y\in F_{\widehat{t}_0}$, and so $x\odot y\in
F_{\widehat{t}_0}$, i.e., $\widehat{\mu_F}(x\odot
y)\ge\mathrm{rmin}\{\widehat{\mu_F}(x),
\widehat{\mu_F}(y),[0.5,0.5]\}$. If $x,y\in A$ and $x\le y$, then
$\widehat{\mu_F}(x)\ge \mathrm{
rmin}\{\widehat{\mu_F}(x),[0.5,0.5]\}= \widehat{s}_0$. Hence $x\in
F_{\widehat{s}_0}$, and so $y\in F_{\widehat{s}_0}$. Thus
$\widehat{\mu_F}(y)\ge \widehat{s}_0=\mathrm{
rmin}\{\widehat{\mu_F}(x), [0.5,0.5]\}$. Therefore, $F$ is an
interval valued $(\in,\ivq)$-fuzzy filter of $A$.
\end{proof}

Naturally, we can establish a similar result when each nonempty
$F_{\widehat{t}}$ is a filter of $A$ for
$[0.5,0.5]<\widehat{\,t}\le [1,1]$.

\begin{theorem}\label{T3.8} For $[0.5,0.5]<\widehat{\,t}\le [1,1]$ each nonempty level subset
$F_{\widehat{t}}$ of an interval valued fuzzy set $F$ of $A$ is a
filter if and only if for all $x,y\in A$ the following two
conditions are satisfied:

\smallskip
$(F_9)$ \ $\mathrm{rmax}\{\widehat{\mu_F}(x\odot
y),\,[0.5,0.5]\}\ge\mathrm{rmin}\{\widehat{\mu_F}(x),\widehat{\mu_F}(y)\}$,

\smallskip
$(F_{10})$ \ $\mathrm{rmax}\{\widehat{\mu_F}(y),\,[0.5,0.5]\}\ge
\widehat{\mu_F}(x)$ when $x\le y$.
\end{theorem}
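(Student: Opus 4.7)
The plan is to treat the biconditional as two implications, relying throughout on the paper's standing hypothesis that every value $\widehat{\mu_F}(z)=[\mu^{\bot}_F(z),\mu^{\top}_F(z)]$ is comparable to $[0.5,0.5]$, i.e., either $\widehat{\mu_F}(z)<[0.5,0.5]$ or $\widehat{\mu_F}(z)\ge [0.5,0.5]$. This dichotomy will let us compute $\mathrm{rmax}\{\widehat{\mu_F}(z),[0.5,0.5]\}$ case-by-case as either $[0.5,0.5]$ or $\widehat{\mu_F}(z)$, which is the device that turns an inequality in the partial order $(D[0,1],\le)$ into a usable statement.

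For the forward direction, assume every nonempty level set $F_{\widehat{\,t}}$ with $[0.5,0.5]<\widehat{\,t}\le [1,1]$ is a filter. To verify $(F_9)$, fix $x,y\in A$ and split on $\mathrm{rmin}\{\widehat{\mu_F}(x),\widehat{\mu_F}(y)\}$: if this rmin is $\le [0.5,0.5]$, then $\mathrm{rmax}\{\widehat{\mu_F}(x\odot y),[0.5,0.5]\}\ge [0.5,0.5]$ suffices; otherwise set $\widehat{\,t}=\mathrm{rmin}\{\widehat{\mu_F}(x),\widehat{\mu_F}(y)\}>[0.5,0.5]$, note $x,y\in F_{\widehat{\,t}}$, apply the filter condition to get $x\odot y\in F_{\widehat{\,t}}$, and conclude $\widehat{\mu_F}(x\odot y)\ge\widehat{\,t}>[0.5,0.5]$, so that the rmax equals $\widehat{\mu_F}(x\odot y)\ge\widehat{\,t}$. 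The argument for $(F_{10})$ is analogous: either $\widehat{\mu_F}(x)\le [0.5,0.5]$ (trivial), or we take $\widehat{\,t}=\widehat{\mu_F}(x)>[0.5,0.5]$, exploit $x\in F_{\widehat{\,t}}$ together with $x\le y$ to obtain $y\in F_{\widehat{\,t}}$, and extract the desired inequality.

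For the converse, assume $(F_9)$ and $(F_{10})$ hold and fix $\widehat{\,t}\in D[0,1]$ with $[0.5,0.5]<\widehat{\,t}\le [1,1]$ and $F_{\widehat{\,t}}\ne\emptyset$. Given $x,y\in F_{\widehat{\,t}}$ we get from $(F_9)$ that $\mathrm{rmax}\{\widehat{\mu_F}(x\odot y),[0.5,0.5]\}\ge\widehat{\,t}>[0.5,0.5]$. The comparability dichotomy forces $\widehat{\mu_F}(x\odot y)\ge [0.5,0.5]$ (otherwise the rmax would equal $[0.5,0.5]$, contradicting $\widehat{\,t}>[0.5,0.5]$), so the rmax collapses to $\widehat{\mu_F}(x\odot y)$, giving $x\odot y\in F_{\widehat{\,t}}$. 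An identical argument using $(F_{10})$ shows $F_{\widehat{\,t}}$ is upward closed, completing the filter verification.

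The main obstacle is strictly bookkeeping in the partial order $D[0,1]$: the inequality $\mathrm{rmax}\{\widehat{\mu_F}(z),[0.5,0.5]\}\ge\widehat{\,t}$ does \emph{not} by itself imply $\widehat{\mu_F}(z)\ge\widehat{\,t}$, since one could in principle have $\widehat{\mu_F}(z)$ sitting componentwise beside $\widehat{\,t}$ in a way the rmax masks. The paper sidesteps this precisely through the standing two-valued comparability hypothesis on $\widehat{\mu_F}$; losing sight of that restriction is the only place the argument can actually go wrong, so stating and invoking it cleanly in each case split is the critical technical step.
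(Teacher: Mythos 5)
Your proposal is correct and follows essentially the same route as the paper: both directions hinge on relating membership in $F_{\widehat{\,t}}$ for $\widehat{\,t}>[0.5,0.5]$ to the inequalities via the standing comparability hypothesis on $\widehat{\mu_F}$. The only difference is organizational --- you run the forward direction as a direct case split on whether the relevant value exceeds $[0.5,0.5]$, whereas the paper argues by contradiction from a strict inequality $\mathrm{rmax}<\mathrm{rmin}$; your version is in fact slightly more careful, since in the partial order $D[0,1]$ the negation of $\ge$ is not $<$.
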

\begin{proof} Let $F_{\widehat{t}}$ be a nonempty
level subset of $F$. Assume that $F_{\widehat{t}}$ is a filter of
$A$. If $\mathrm{rmax}\{\widehat{\mu_F}(x\odot
y),\,[0.5,0.5]\}<\mathrm{rmin}\{\widehat{\mu_F}(x),\widehat{\mu_F}(y)\}=\widehat{\,t}$
for some $x,y\in A$, then $[0.5,0.5]<\widehat{\,t}\le [1,1]$,
$\widehat{\mu_F}(x\odot y)<\widehat{\,t}$ and $x,y\in
F_{\widehat{t}}\,$. Thus $x\odot y\in F_{\widehat{t}}$, whence
$\widehat{\mu_F}(x\odot y)\ge \widehat{\,t}$, which contradicts to
$\widehat{\mu_F}(x\odot y)<\widehat{\,t}$. So, $(F_9)$ is
satisfied.

If there exist $x,y\in A$ such that
$\mathrm{rmax}\{\widehat{\mu_F}(y),\,[0.5,0.5]\}
<\widehat{\mu_F}(x)=\widehat{\,t}$, then
$[0.5,0.5]<\widehat{\,t}\le [1,1]$,
$\widehat{\mu_F}(y)<\widehat{\,t}$ and $x\in F_{\widehat{\,t}}$.
Since $x\leq y$ we also have $y\in F_{\widehat{t}}\,$. Thus
$\widehat{\mu_F}(y)\ge \widehat{\,t}$, which is impossible.
Therefore $\mathrm{rmax}\{\widehat{\mu_F}(y),\,[0.5,0.5]\}\ge
\widehat{\mu_F}(x)$ for $x\le y$.

Conversely, suppose that the conditions $(F_9)$ and $(F_{10})$ are
satisfied. In order to prove that for $[0.5,0.5]<\widehat{\,t}\le
[1,1]$ each nonempty level subset $F_{\widehat{t}}$ is a filter of
$A$ assume that $x,y\in F_{\widehat{t}}$. In this case
$[0.5,0.5]<\widehat{\,t}\le\mathrm{rmin}\{\widehat{\mu_F}(x),
\widehat{\mu_F}(y)\}\le\mathrm{rmax}\{\widehat{\mu_F}(x\odot
y),\,[0.5,0.5]\}=\widehat{\mu_F}(x\odot y)$, which proves $x\odot
y\in F_{\widehat{t}}$. If $x\le y$ and $x\in F_{\widehat{t}}$,
then $[0.5,0.5]<\widehat{\,t}\le \widehat{\mu_F}(x)\le
\mathrm{rmax}\{\widehat{\mu_F}(y),\,[0.5,0.5]\}=\widehat{\mu_F}(y)$,
and so $y\in F_{\widehat{\,t}}$. This completes the
proof.
\end{proof}

Let $J=\{t\in (0,1]\,|\,F_{\widehat{t}}\ne\emptyset\}$, where $F$
is an interval valued fuzzy set of $A$. For $J=(0,1]$ \ $F$ is an
ordinary interval valued fuzzy filter of $A$ (Theorem \ref{T3.2});
for $J=(0, 0.5 ]$ it is an $(\in,\ivq)$-fuzzy filter of $A$
(Theorem \ref{T3.7}).

In \cite{26}, Yuan, Zhang and Ren  gave the definition of a fuzzy
subgroup with thresholds which is a generalization of Rosenfeld's
fuzzy subgroup, and also Bhakat and Das's fuzzy subgroup. Based on
the results of \cite{26}, we can extend the concept of a fuzzy
subgroup with thresholds to the concept of an interval valued  fuzzy
filter with thresholds in the following way:

\begin{definition}\label{D3.9}\rm Let $[0,0]\le\widehat{\alpha}<\widehat{\beta}\leq [1,1]$. An
interval valued  fuzzy set $F$ of $A$ is called an {\it interval
valued  fuzzy filter with thresholds}
$(\widehat{\alpha},\widehat{\beta})$ if for all $x,y\in A$, the
following two conditions are satisfied:

\smallskip
$(F_{11})$ \ $\mathrm{rmax}\{\widehat{\mu_F}(x\odot
y),\widehat{\alpha}\}\ge\mathrm{rmin}\{\widehat{\mu_F}(x),\widehat{\mu_F}(y),
\widehat{\beta}\}$,

\smallskip
$(F_{12})$ \
$\mathrm{rmax}\{\widehat{\mu_F}(y),\widehat{\alpha}\}\ge\mathrm{rmin}
\{\widehat{\mu_F}(x),\widehat{\beta}\}$
for $x\le y$.
\end{definition}

\begin{theorem}\label{T3.10} An interval valued  fuzzy set $F$ of $A$ is
an interval valued  fuzzy filter with thresholds
$(\widehat{\alpha},\widehat{\beta})$ if and only if each nonempty
$F_{\widehat{t}}$, where
$\widehat{\alpha}<\widehat{\,t}\le\widehat{\beta}$ is a filter of
$A$.
\end{theorem}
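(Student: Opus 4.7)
The plan is to prove the two directions by mimicking the structure of Theorem~\ref{T3.7}, but with the constants $[0.5,0.5]$ replaced by $\widehat{\beta}$ on the lower bounds and $[0,0]$ replaced by $\widehat{\alpha}$ on the upper bounds. The key observation is that in both $(F_{11})$ and $(F_{12})$ the min on the right is automatically capped at $\widehat{\beta}$ and the max on the left absorbs anything below $\widehat{\alpha}$, so only the interval $\widehat{\alpha}<\widehat{\,t}\le\widehat{\beta}$ is relevant.

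For the forward implication, I would fix $\widehat{\alpha}<\widehat{\,t}\le\widehat{\beta}$ with $F_{\widehat{t}}\ne\emptyset$, take $x,y\in F_{\widehat{t}}$, and apply $(F_{11})$ to get
\[
\mathrm{rmax}\{\widehat{\mu_F}(x\odot y),\widehat{\alpha}\}\ge\mathrm{rmin}\{\widehat{\mu_F}(x),\widehat{\mu_F}(y),\widehat{\beta}\}\ge\mathrm{rmin}\{\widehat{\,t},\widehat{\beta}\}=\widehat{\,t}.
\]
Since $\widehat{\alpha}<\widehat{\,t}$, the max must be attained by $\widehat{\mu_F}(x\odot y)$, giving $x\odot y\in F_{\widehat{t}}$. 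The argument for $x\le y$ is identical, invoking $(F_{12})$ in place of $(F_{11})$.

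For the converse, I would argue by contradiction. Suppose $(F_{11})$ fails for some $x,y\in A$ and set $\widehat{\,t}:=\mathrm{rmin}\{\widehat{\mu_F}(x),\widehat{\mu_F}(y),\widehat{\beta}\}$. Then $\mathrm{rmax}\{\widehat{\mu_F}(x\odot y),\widehat{\alpha}\}<\widehat{\,t}$ forces both $\widehat{\alpha}<\widehat{\,t}\le\widehat{\beta}$ and $\widehat{\mu_F}(x\odot y)<\widehat{\,t}$. By construction $x,y\in F_{\widehat{t}}$, which is then a filter, so $x\odot y\in F_{\widehat{t}}$, contradicting $\widehat{\mu_F}(x\odot y)<\widehat{\,t}$. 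Failure of $(F_{12})$ for some $x\le y$ is handled analogously by letting $\widehat{\,t}:=\mathrm{rmin}\{\widehat{\mu_F}(x),\widehat{\beta}\}$, using $x\in F_{\widehat{t}}$ and the filter axiom to force $y\in F_{\widehat{t}}$.

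The routine parts are purely bookkeeping with rmin/rmax, analogous to Theorem~\ref{T3.7}. The only mild subtlety—and the step I expect to be the main obstacle to write cleanly—is to verify that the threshold $\widehat{\,t}$ manufactured in the contradiction argument indeed lies in the half-open interval $(\widehat{\alpha},\widehat{\beta}]$ (so that the hypothesis on level subsets applies), which is exactly what the strict inequality against $\mathrm{rmax}\{\cdot,\widehat{\alpha}\}$ delivers. Once that is checked, both directions close immediately.
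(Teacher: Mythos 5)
Your proof is correct and takes essentially the route the paper intends: the paper omits the argument entirely, saying only that it is ``similar to the proofs of Theorems \ref{T3.7} and \ref{T3.8}'', and your two directions are exactly the threshold-generalized versions of those arguments. The only caveat is one the paper itself shares throughout (e.g.\ in Theorems \ref{T3.5} and \ref{T3.8}): since $\le$ on $D[0,1]$ is only a partial order, steps like ``the max must be attained by $\widehat{\mu_F}(x\odot y)$'' and reading the failure of $\mathrm{rmax}\ge\mathrm{rmin}$ as a strict reverse inequality tacitly assume comparability, which is no worse than the source.
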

\begin{proof} The proof is similar to the proof of Theorems
\ref{T3.7} and \ref{T3.8}.
\end{proof}

\section{Interval valued $(\in,\ivq)$-fuzzy implicative filters}

\begin{definition}\label{D4.1}\rm An interval valued $(\in,\ivq)$-fuzzy filter $F$ of
$A$ is called an {\it interval valued $(\in,\ivq)$-fuzzy implicative
filter} if for all $x,y\in A$ it satisfies the condition:

\medskip
$(F_{13})$ \ $\left\{\begin{array}{llll}
\widehat{\mu_F}(x)\ge\mathrm{rmin}\{\widehat{\mu_F}((x\rightarrow y)\hookrightarrow x),
 \,[0.5,0.5]\},\\[1mm]
\widehat{\mu_F}(x)\ge\mathrm{rmin}\{\widehat{\mu_F}((x\hookrightarrow
y)\rightarrow x),\,[0.5,0.5]\}.
\end{array}\right.$
\end{definition}

\medskip
The following proposition is obvious.

\begin{proposition}\label{P4.2} If $F$ is an interval valued $(\in,\ivq)$-fuzzy
implicative filter of $A$, then

\medskip
$(1)$ \ $\left\{\begin{array}{llll}\widehat{\mu_F}(x)\ge
\mathrm{rmin}\{\widehat{\mu_F}((x\rightarrow y)\rightarrow x),\,[0.5,0.5]\},\\[1mm]
\widehat{\mu_F}(x)\ge
\mathrm{rmin}\{\widehat{\mu_F}((x\hookrightarrow y)\hookrightarrow
x),\,[0.5,0.5]\},
\end{array}\right.$

\medskip
$(2)$ \ $\left\{\begin{array}{llll} \widehat{\mu_F}(((y\rightarrow
x)\hookrightarrow x)\rightarrow
y)\ge\mathrm{rmin}\{\widehat{\mu_F}(x\rightarrow
y),\,[0.5,0.5]\},\\[1mm]
\widehat{\mu_F}(((y\hookrightarrow x)\rightarrow x)\hookrightarrow
y)\ge \mathrm{rmin}\{\widehat{\mu_F}(x\hookrightarrow
y),\,[0.5,0.5]\},
\end{array}\right.$

\medskip
$(3)$ \ $\left\{\begin{array}{llll}
\widehat{\mu_F}((y\hookrightarrow x)\rightarrow x)\ge
\mathrm{rmin}\{\widehat{\mu_F}((x\rightarrow y)\hookrightarrow
y),\,[0.5,0.5]\},\\[1mm]
\widehat{\mu_F}((y\rightarrow x)\hookrightarrow x)\ge
\mathrm{rmin}\{\widehat{\mu_F}((x\hookrightarrow y)\rightarrow
y),\,[0.5,0.5]\},
\end{array}\right.$

\medskip
$(4)$ \ $\left\{\begin{array}{llll}\widehat{\mu_F}((y\rightarrow
x)\hookrightarrow x)\ge \mathrm{rmin}\{\widehat{\mu_F}((x\rightarrow
y)\hookrightarrow
y),\,[0.5,0.5]\},\\[1mm]
\widehat{\mu_F}((y\hookrightarrow x)\rightarrow x)\ge
\mathrm{rmin}\{\widehat{\mu_F}((x\hookrightarrow y)\hookrightarrow
y),\,[0.5,0.5]\}
\end{array}\right.$\\[1mm]
hold for all $x,y\in A$. \hfill$\Box{}$
\end{proposition}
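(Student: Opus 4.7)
The overall plan is to reduce each of the four inequalities (1)--(4) to a two-step argument: first produce an appropriate pointwise inequality in the pseudo $BL$-algebra $A$ using the axioms $(a_1)$--$(a_3)$ together with the basic properties $(1)$--$(10)$ from Section 2, and then combine it with $(F_{13})$ and the monotonicity property $(F_6)$ of Theorem \ref{T3.5} (which applies because any interval valued $(\in,\ivq)$-fuzzy implicative filter is in particular an interval valued $(\in,\ivq)$-fuzzy filter, by Definition \ref{D4.1}).

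For part (1), the axiom $(a_2)$ directly yields $(x\rightarrow y)\rightarrow x=x\wedge y\le x$, and its standard dual in pseudo $BL$-algebras gives $(x\hookrightarrow y)\hookrightarrow x\le x$. Consequently $(F_6)$ alone produces both inequalities of (1); the implicativeness assumption $(F_{13})$ is actually not required for this part. For parts (2)--(4), however, the pointwise comparisons in $A$ point the wrong way under pure monotonicity---for example $x\le(y\rightarrow x)\hookrightarrow x$, a consequence of property $(7)$ and $(a_1)$, yields only $((y\rightarrow x)\hookrightarrow x)\rightarrow y\le x\rightarrow y$, which is the opposite of what is wanted. Instead, for each of (2), (3) and (4) I would substitute a suitable element into $(F_{13})$: in (2), take $u=((y\rightarrow x)\hookrightarrow x)\rightarrow y$ and apply $(F_{13})$ to $u$ with a carefully chosen second argument so that $(u\rightarrow\cdot)\hookrightarrow u$ simplifies, via $(a_1)$ together with property $(7)$, to $x\rightarrow y$; the dual substitution (swap of $\rightarrow$ and $\hookrightarrow$) handles the companion inequality. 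The same recipe, with different choices of the second argument, delivers (3) and (4); these are the non-commutative analogues of the classical fact that an implicative filter $I$ of a $BL$-algebra is closed under the transformation $x\rightarrow y\mapsto ((y\rightarrow x)\rightarrow x)\rightarrow y$.

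The main obstacle is the bookkeeping caused by the non-commutativity of the two implications: since $\rightarrow$ and $\hookrightarrow$ are distinct, each inequality in (2)--(4) has both a $\rightarrow$-version and a $\hookrightarrow$-version, and the adjunction $(a_1)$ must be applied on the correct side at each substitution into $(F_{13})$. Once the right pseudo $BL$-algebra inequality is isolated, the propagation through $\mathrm{rmin}\{\cdot,[0.5,0.5]\}$ is entirely mechanical, which is presumably why the authors dismiss the proposition as ``obvious''.
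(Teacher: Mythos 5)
The paper offers no actual proof of this proposition (it is dismissed as ``obvious''), so your argument has to stand on its own, and as written it does not. Your treatment of part (1) rests on reading axiom $(a_2)$ literally as $(x\rightarrow y)\rightarrow x=x\wedge y$; this is a misprint in the paper for the standard pseudo $BL$ identity $(x\rightarrow y)\odot x=x\wedge y$. Indeed, property $(10)$ gives $x\le (x\rightarrow y)\rightarrow x$, so the literal reading of $(a_2)$ would force $x\le x\wedge y\le y$ for all $x,y$ and collapse $A$ to a single point, contradicting Example \ref{Ex2.1}. Concretely, in the Lukasiewicz algebra $[0,1]$ (a pseudo $BL$-algebra with $\rightarrow\,=\,\hookrightarrow$) one has $(0.5\rightarrow 0)\rightarrow 0.5=1\not\le 0.5$; taking $I=\{1\}$ in Example \ref{Ex3.4} produces an interval valued $(\in,\ivq)$-fuzzy filter with $\widehat{\mu_F}(0.5)=[0.3,0.4]<[0.5,0.5]=\mathrm{rmin}\{\widehat{\mu_F}((0.5\rightarrow 0)\rightarrow 0.5),[0.5,0.5]\}$. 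So your claim that $(F_6)$ alone yields (1) and that $(F_{13})$ is not needed is false: the pointwise inequality goes the wrong way, and implicativeness is essential. Part (1) must instead be extracted from $(F_{13})$ (for instance by relating $(x\rightarrow y)\rightarrow x$ to a premise of the form $(x\hookrightarrow z)\rightarrow x$), or by passing to level sets via Theorem \ref{T4.3} and invoking the corresponding crisp facts about implicative filters from \cite{8}.

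For parts (2)--(4) you supply only a plan, not a proof: you propose to feed $u=((y\rightarrow x)\hookrightarrow x)\rightarrow y$ into $(F_{13})$ with a ``carefully chosen second argument'' $v$ so that $(u\rightarrow v)\hookrightarrow u$ dominates $x\rightarrow y$. Exhibiting such a $v$ and verifying the inequality $x\rightarrow y\le (u\rightarrow v)\hookrightarrow u$ (equivalently, by $(a_1)$, $(u\rightarrow v)\odot(x\rightarrow y)\le u$) is precisely the mathematical content of the proposition; in the crisp theory the implications ``implicative $\Rightarrow$ $MV$-filter'' and the commutation laws (3), (4) require genuine multi-step order computations in $A$, not a one-line adjunction. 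As it stands the key algebraic identity in each of the six remaining inequalities is an unspecified placeholder, so the argument cannot be checked. Either carry out the substitutions explicitly, or reduce everything to the level-set formulation (Theorem \ref{T4.3}) and cite the corresponding results on implicative filters of pseudo $BL$-algebras.
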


\begin{theorem}\label{T4.3}
An interval valued  fuzzy set $F$ of $A$ is an interval valued
$(\in,\ivq)$-fuzzy implicative filter if and only if for
$[0,0]<\widehat{\,t}\le [0.5,0.5]$ each nonempty level subset
$F_{\widehat{t}}$ is an implicative filter of $A$.
\end{theorem}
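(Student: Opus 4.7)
The plan is to piggy-back on Theorem~\ref{T3.7}, which already handles the ordinary filter axioms $(F_5)$ and $(F_6)$ via level subsets for $[0,0]<\widehat{\,t}\le[0.5,0.5]$. Thus the only new content is the implicativity condition $(F_{13})$, which must be matched with the two defining implications of an implicative filter: $(x\rightarrow y)\hookrightarrow x\in F_{\widehat{\,t}}\Rightarrow x\in F_{\widehat{\,t}}$ and $(x\hookrightarrow y)\rightarrow x\in F_{\widehat{\,t}}\Rightarrow x\in F_{\widehat{\,t}}$. Since the two conditions in $(F_{13})$ are perfectly symmetric, I will only write out one of them in detail.

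For the forward direction, assume $F$ is an interval valued $(\in,\ivq)$-fuzzy implicative filter and fix $[0,0]<\widehat{\,t}\le[0.5,0.5]$ with $F_{\widehat{\,t}}\neq\emptyset$. Theorem~\ref{T3.7} immediately gives that $F_{\widehat{\,t}}$ is a filter of $A$. Now suppose $(x\rightarrow y)\hookrightarrow x\in F_{\widehat{\,t}}$, so $\widehat{\mu_F}((x\rightarrow y)\hookrightarrow x)\ge\widehat{\,t}$. Applying the first clause of $(F_{13})$ and using $\widehat{\,t}\le[0.5,0.5]$,
\[
\widehat{\mu_F}(x)\ge\mathrm{rmin}\{\widehat{\mu_F}((x\rightarrow y)\hookrightarrow x),\,[0.5,0.5]\}\ge\mathrm{rmin}\{\widehat{\,t},[0.5,0.5]\}=\widehat{\,t},
\]
so $x\in F_{\widehat{\,t}}$. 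The same argument with the second clause of $(F_{13})$ handles the $(x\hookrightarrow y)\rightarrow x$ case.

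For the converse, assume every nonempty $F_{\widehat{\,t}}$ with $[0,0]<\widehat{\,t}\le[0.5,0.5]$ is an implicative filter of $A$. Theorem~\ref{T3.7} then shows that $F$ is an interval valued $(\in,\ivq)$-fuzzy filter, so only $(F_{13})$ remains. The standard trick is to argue by contradiction: suppose there exist $x_{0},y_{0}\in A$ with
\[
\widehat{\mu_F}(x_{0})<\mathrm{rmin}\{\widehat{\mu_F}((x_{0}\rightarrow y_{0})\hookrightarrow x_{0}),\,[0.5,0.5]\}=:\widehat{\,t}_{0}.
\]
Then $[0,0]<\widehat{\,t}_{0}\le[0.5,0.5]$ and $(x_{0}\rightarrow y_{0})\hookrightarrow x_{0}\in F_{\widehat{\,t}_{0}}$, whose implicativity forces $x_{0}\in F_{\widehat{\,t}_{0}}$, contradicting $\widehat{\mu_F}(x_{0})<\widehat{\,t}_{0}$. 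A symmetric argument settles the $(x\hookrightarrow y)\rightarrow x$ clause.

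The only mildly delicate point is guaranteeing that the threshold $\widehat{\,t}_{0}$ chosen in the contradiction step actually lies in the admissible range $(0,0.5]$ of intervals; this is automatic from the $\mathrm{rmin}$ with $[0.5,0.5]$ and the assumption that the inequality is strict (so $\widehat{\,t}_{0}>[0,0]$). No separate treatment of the case $\widehat{\,t}_{0}=[0,0]$ is needed because that would make the inequality vacuous. Everything else is mechanical and patterns on Theorems~\ref{T3.7} and~\ref{T3.2}, so no new lemmas are required.
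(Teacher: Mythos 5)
Your proof is correct and follows essentially the same route as the paper: both directions reduce the ordinary filter axioms to Theorem~\ref{T3.7} and handle $(F_{13})$ by taking the threshold $\mathrm{rmin}\{\widehat{\mu_F}((x\rightarrow y)\hookrightarrow x),[0.5,0.5]\}$ and invoking implicativity of the corresponding level set. The only (immaterial) difference is that you phrase the converse as a proof by contradiction, whereas the paper argues directly that $x\in F_{\widehat{s}_0}$ forces $\widehat{\mu_F}(x)\ge\widehat{s}_0$.
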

\begin{proof} Let $F$ be an interval valued $(\in,\ivq)$-fuzzy
implicative filter of $A$ and $ [0,0]<\widehat{\,t}\le [0.5,0.5]$.
Then, by Theorem \ref{T3.7}, each nonempty $F_{\widehat{t}}$ is a
filter of $A$. For all $x,y\in A$ from $(x\rightarrow
y)\hookrightarrow x\in F_{\widehat{t}}$ it follows
$\widehat{\mu_F}((x\rightarrow y)\hookrightarrow x)\ge
\widehat{\,t}$. This, according to $(F_{13})$, gives
$\widehat{\mu_F}(x)\ge
\mathrm{rmin}\{\widehat{\mu_F}((x\rightarrow y)\hookrightarrow
x),\,[0.5,0.5]\}\ge
\mathrm{rmin}\{\widehat{\,t},[0.5,0.5]\}=\widehat{\,t}$. So, $x\in
F_{\widehat{t}}$. Analogously $(x\hookrightarrow y)\rightarrow
x\in F_{\widehat{t}}$ implies $x\in F_{\widehat{t}}$. Therefore
$F_{\widehat{t}}$ is an implicative filter of $A$.

Conversely, if $F$ is an interval valued  fuzzy set of $A$ such
that for $[0,0]<\widehat{t}\le [0.5,0.5]$ each nonempty level set
$F_{\widehat{t}}$ is an implicative filter of $A$, then, by
Theorem \ref{T3.7}, $F$ is an interval valued $(\in,\ivq)$-fuzzy
filter of $A$. Putting $\widehat{\mu_F}((x\rightarrow
y)\hookrightarrow x)\ge
\widehat{s}_0=\mathrm{rmin}\{\widehat{\mu_F}((x\rightarrow
y)\hookrightarrow x),\,[0.5,0.5]\}$, we obtain $(x\rightarrow
y)\hookrightarrow x\in F_{\widehat{s}_0}$. Consequently, $x\in
F_{\widehat{s}_0}$, i.e., $\widehat{\mu_F}(x)\ge
\widehat{s}_0=\mathrm{rmin}\{\widehat{\mu_F}((x\rightarrow
y)\hookrightarrow x),\,[0.5,0.5]\}$. This proves the first
condition of $(F_{13})$. Similarly, we prove the second
condition.
 \end{proof}

Basing on our Theorem \ref{T3.8} the above result can be extended
to the case $[0.5,0.5]<\widehat{\,t}\le [1,1]$ in the following
way:

\begin{theorem}\label{T4.4} For $[0.5,0.5]<\widehat{\,t}\le [1,1]$ each nonempty level
subset $F_{\widehat{t}}$ of an interval valued fuzzy set $F$ of
$A$ is an implicative filter of $A$ if and only if the conditions
$(F_9)$, $(F_{10})$ and

\smallskip
$(F_{14})$ \
$\left\{\begin{array}{llll}\mathrm{rmax}\{\widehat{\mu_F}(x),\,[0.5,0.5]\}\ge
\widehat{\mu_F}((x\rightarrow
y)\hookrightarrow x),\\[1mm]
\mathrm{rmax}\{\widehat{\mu_F}(x), \,[0.5,0.5]\}\ge
\widehat{\mu_F}((x\hookrightarrow y )\rightarrow
x).\end{array}\right.$\\[1mm]
are satisfied for all $x,y\in A$.
\end{theorem}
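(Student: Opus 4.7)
The plan is to mirror the structure of Theorem~\ref{T3.8} while folding in the implicative clause, so that $(F_9)$ and $(F_{10})$ handle the filter axioms and $(F_{14})$ handles implicativity. Both directions will be established level-wise, exploiting the strict separation $[0.5,0.5]<\widehat{\,t}\le[1,1]$.

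For the forward direction, I would assume every non-empty $F_{\widehat{t}}$ with $[0.5,0.5]<\widehat{\,t}\le[1,1]$ is an implicative filter. Since implicative filters are in particular filters, Theorem~\ref{T3.8} applied in reverse immediately delivers $(F_9)$ and $(F_{10})$. For $(F_{14})$ I would argue by contradiction: suppose there exist $x,y\in A$ with $\mathrm{rmax}\{\widehat{\mu_F}(x),[0.5,0.5]\}<\widehat{\mu_F}((x\rightarrow y)\hookrightarrow x)=\widehat{\,t}$. Componentwise this forces $t^{\bot}>0.5$ and $t^{\top}>0.5$, so $[0.5,0.5]<\widehat{\,t}\le[1,1]$; moreover $\widehat{\mu_F}(x)<\widehat{\,t}$. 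Hence $(x\rightarrow y)\hookrightarrow x\in F_{\widehat{t}}$ but $x\notin F_{\widehat{t}}$, contradicting implicativity. The $(x\hookrightarrow y)\rightarrow x$ clause is symmetric.

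For the reverse direction, assume $(F_9)$, $(F_{10})$ and $(F_{14})$ hold. Theorem~\ref{T3.8} yields that every non-empty $F_{\widehat{t}}$ with $[0.5,0.5]<\widehat{\,t}\le[1,1]$ is a filter. To show implicativity, take $(x\rightarrow y)\hookrightarrow x\in F_{\widehat{t}}$, so $\widehat{\mu_F}((x\rightarrow y)\hookrightarrow x)\ge\widehat{\,t}$. By $(F_{14})$, $\mathrm{rmax}\{\widehat{\mu_F}(x),[0.5,0.5]\}\ge\widehat{\,t}>[0.5,0.5]$. Since $\widehat{\,t}$ is strictly above $[0.5,0.5]$ in the relevant coordinate(s), the $\mathrm{rmax}$ cannot be realized by $[0.5,0.5]$, and so $\widehat{\mu_F}(x)\ge\widehat{\,t}$, i.e., $x\in F_{\widehat{t}}$. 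The second clause of the implicative axiom is handled with the dual half of $(F_{14})$.

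The delicate step is the last one: extracting $\widehat{\mu_F}(x)\ge\widehat{\,t}$ from $\mathrm{rmax}\{\widehat{\mu_F}(x),[0.5,0.5]\}\ge\widehat{\,t}$. This uses the standing dichotomy imposed in Section~3 that $\widehat{\mu_F}(x)$ is either entirely below $[0.5,0.5]$ or at/above it: under $\widehat{\mu_F}(x)<[0.5,0.5]$ the $\mathrm{rmax}$ would be exactly $[0.5,0.5]$, which cannot dominate $\widehat{\,t}>[0.5,0.5]$; hence $\widehat{\mu_F}(x)\ge[0.5,0.5]$ and $\mathrm{rmax}\{\widehat{\mu_F}(x),[0.5,0.5]\}=\widehat{\mu_F}(x)$, giving the desired inequality. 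Once this componentwise bookkeeping is done cleanly, the rest of the argument is a routine translation of Theorem~\ref{T3.8} and Theorem~\ref{T4.3}.
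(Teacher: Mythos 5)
Your proposal is correct and follows essentially the same route as the paper: Theorem~\ref{T3.8} disposes of $(F_9)$ and $(F_{10})$, the forward direction gets $(F_{14})$ by the same contradiction argument, and the converse reads membership of $(x\rightarrow y)\hookrightarrow x$ in $F_{\widehat{t}}$ back through $(F_{14})$. Your explicit justification of the final step --- using the standing dichotomy on $\widehat{\mu_F}(x)$ to pass from $\mathrm{rmax}\{\widehat{\mu_F}(x),[0.5,0.5]\}\ge\widehat{\,t}$ to $\widehat{\mu_F}(x)\ge\widehat{\,t}$ --- is in fact cleaner than the paper's own (slightly garbled) inequality chain at that point.
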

\begin{proof} According to Theorem \ref{T3.8}, for
$[0.5,0.5]<\widehat{\,t}\le [1,1]$ each nonempty level subset
$F_{\widehat{t}}$ of $F$ is a filter of $A$ if and only if $F$
satisfies $(F_9)$ and $(F_{10})$. So, we shall prove only that a
filter $F_{\widehat{t}}$ is implicative if and only if $F$
satisfies $(F_{14})$.

To prove $(F_{14})$ suppose the existence of $x,y\in A$ such that
$$
\mathrm{rmax}\{\widehat{\mu_F}(x),\,[0.5,0.5]\}<\widehat{\,t}=
\widehat{\mu_F}((x\rightarrow y)\hookrightarrow x).
$$
In this case $[0.5,0.5]<\widehat{\,t}\le [1,1]$,
$\widehat{\mu_F}(x)<\widehat{\,t}$ and $(x\rightarrow
y)\hookrightarrow x\in F_{\widehat{\,t}}.$ Since $F_{\widehat{t}}$
is an implicative filter of $A$, we have $x\in F_{\widehat{t}}$,
and so $\widehat{\mu_F}(x)\ge \widehat{\,t}$, which is a
contradiction. Similarly, we can prove the second inequality of
$(F_{12})$.

Conversely, suppose that an interval valued  fuzzy set $F$
satisfies $(F_{14})$ and each nonempty $F_{\widehat{t}}$ is a
filter of $A$. If $[0.5,0.5]<\widehat{\,t}\le [1,1]$ and
$(x\rightarrow y)\hookrightarrow x\in F_{\widehat{t}}$, then
$[0.5,0.5]<\widehat{\,t}\le
\mathrm{rmin}\{\widehat{\mu_F}((x\rightarrow y)\hookrightarrow
x)\le\mathrm{rmax}\{\widehat{\mu_F}(x),
[0.5,0.5]\}<\widehat{\mu_F}(x),$ which implies $x\in
F_{\widehat{\,t}}$. Similarly, from $(x\hookrightarrow
y)\rightarrow x\in F_{\widehat{\,t}}$ it follows $x\in
F_{\widehat{t}}$. Thus, $F_{\widehat{t}}$ is an implicative filter
of $A$.
 \end{proof}

Basing on the method presented in \cite{26}, we can extend the
concept of a fuzzy subgroup with thresholds to the concept of an
interval valued fuzzy implicative filter with thresholds.

\begin{definition}\label{D4.5}\rm Let $[0,0]\le \widehat{\alpha}<\widehat{\beta}\le [1,1]$.
An interval valued  fuzzy set $F$ of $A$ is called an {\it interval
valued  fuzzy implicative filter with thresholds}
$(\widehat{\alpha},\widehat{\beta})$ of $A$ if for all $x,y\in A$ it
satisfies $(F_{11})$, $(F_{12})$ and

\smallskip
$(F_{15})$ \
$\left\{\begin{array}{llll}\mathrm{rmax}\{\widehat{\mu_F}(x),\widehat{\alpha}\}\ge
\mathrm{rmin}\{\widehat{\mu_F}((x\rightarrow y)\hookrightarrow
x),\widehat{\beta}\},\\[1mm]
\mathrm{rmax}\{\widehat{\mu_F}(x),\widehat{\alpha}\}\ge\mathrm{rmin}\{\widehat{\mu_F}((x\hookrightarrow
y)\rightarrow x),\widehat{\beta}\}. \end{array}\right.$
\end{definition}

\begin{theorem}\label{T4.6} An interval valued  fuzzy set $F$ of $A$ is
an interval valued fuzzy implicative filter with thresholds
$(\widehat{\alpha},\widehat{\beta})$ if and only if each nonempty
$F_{\widehat{t}}$, where $\widehat{\alpha }<\widehat{\,t}\le
\widehat{\beta}$ is an implicative filter of $A$.
\end{theorem}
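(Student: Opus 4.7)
The plan is to run the same two-pronged argument used for Theorems \ref{T4.3} and \ref{T4.4}, but parameterized by the arbitrary thresholds $\widehat{\alpha}<\widehat{\beta}$. I would first invoke Theorem \ref{T3.10} to reduce the filter part: since both sides of the biconditional include, as a prerequisite, the conditions $(F_{11})$ and $(F_{12})$ (either by Definition \ref{D4.5} or by the nonempty $F_{\widehat{t}}$ being filters), Theorem \ref{T3.10} already supplies the filter structure of each $F_{\widehat{t}}$ for $\widehat{\alpha}<\widehat{\,t}\le\widehat{\beta}$. So the work reduces to proving that $(F_{15})$ holds if and only if each such nonempty $F_{\widehat{t}}$ is closed under the two implicative implications.

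For the forward direction, assume $F$ satisfies $(F_{15})$ and fix $\widehat{\alpha}<\widehat{\,t}\le\widehat{\beta}$ with $F_{\widehat{t}}$ nonempty. If $(x\rightarrow y)\hookrightarrow x\in F_{\widehat{t}}$, then $\widehat{\mu_F}((x\rightarrow y)\hookrightarrow x)\ge\widehat{\,t}$, and the first inequality of $(F_{15})$ gives
\[
\mathrm{rmax}\{\widehat{\mu_F}(x),\widehat{\alpha}\}\ge\mathrm{rmin}\{\widehat{\mu_F}((x\rightarrow y)\hookrightarrow x),\widehat{\beta}\}\ge\mathrm{rmin}\{\widehat{\,t},\widehat{\beta}\}=\widehat{\,t}.
\]
Because $\widehat{\,t}>\widehat{\alpha}$, the max on the left cannot be realized by $\widehat{\alpha}$, forcing $\widehat{\mu_F}(x)\ge\widehat{\,t}$, i.e.\ $x\in F_{\widehat{t}}$. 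The analogous computation using the second line of $(F_{15})$ handles $(x\hookrightarrow y)\rightarrow x\in F_{\widehat{t}}$.

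For the converse, assume each nonempty $F_{\widehat{t}}$ with $\widehat{\alpha}<\widehat{\,t}\le\widehat{\beta}$ is an implicative filter and suppose, toward a contradiction, that the first half of $(F_{15})$ fails for some $x_0,y_0$. Then
\[
\mathrm{rmax}\{\widehat{\mu_F}(x_0),\widehat{\alpha}\}<\mathrm{rmin}\{\widehat{\mu_F}((x_0\rightarrow y_0)\hookrightarrow x_0),\widehat{\beta}\}=:\widehat{\,t}_0.
\]
By construction $\widehat{\alpha}<\widehat{\,t}_0\le\widehat{\beta}$ and $(x_0\rightarrow y_0)\hookrightarrow x_0\in F_{\widehat{\,t}_0}$, so the implicative filter property gives $x_0\in F_{\widehat{\,t}_0}$, i.e.\ $\widehat{\mu_F}(x_0)\ge\widehat{\,t}_0$, contradicting $\mathrm{rmax}\{\widehat{\mu_F}(x_0),\widehat{\alpha}\}<\widehat{\,t}_0$. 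The second half of $(F_{15})$ is handled identically using the other generator of the implicative filter.

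The only real subtlety, and the one step I would double-check carefully, is the threshold bookkeeping in the converse: one must pick the witness $\widehat{\,t}_0$ as the \emph{right-hand} $\mathrm{rmin}$ (not any intermediate interval), so that the two coordinate inequalities $\widehat{\,t}_0\le\widehat{\mu_F}((x_0\rightarrow y_0)\hookrightarrow x_0)$ and $\widehat{\,t}_0\le\widehat{\beta}$ both hold automatically, and so that $\widehat{\,t}_0>\widehat{\alpha}$ follows from the strict inequality. Once this is set up, the contradiction is immediate and the rest is routine, exactly mirroring the proofs of Theorems \ref{T3.10} and \ref{T4.4}.
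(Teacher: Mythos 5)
Your proof is correct and is precisely the argument the paper has in mind: the paper gives no details for Theorem \ref{T4.6}, saying only that it is proved like Theorems \ref{T4.3} and \ref{T4.4}, and your write-up is the faithful threshold-parameterized version of those proofs (filter part delegated to Theorem \ref{T3.10}, implicative part via the $\mathrm{rmax}/\mathrm{rmin}$ computation with the witness $\widehat{\,t}_0$ chosen as the right-hand $\mathrm{rmin}$). The one caveat, which applies equally to the paper's own Theorems \ref{T3.8} and \ref{T4.4}, is that steps such as ``the max on the left cannot be realized by $\widehat{\alpha}$'' and ``failure of $\ge$ yields $<$'' tacitly assume the relevant interval numbers are comparable in the partial order on $D[0,1]$ (e.g.\ $\widehat{\alpha}<\widehat{\,t}$ does not force $\alpha^{\bot}<t^{\bot}$ in both coordinates); the paper's blanket comparability convention only covers the threshold $[0.5,0.5]$, so for arbitrary $\widehat{\alpha},\widehat{\beta}$ this is a gap inherited from the paper rather than one you introduced.
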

\begin{proof} The proof is similar to the proof of Theorems
\ref{T4.3} and \ref{T4.4}.
\end{proof}

\begin{definition}\label{D4.7}\rm An interval valued $(\in,\ivq)$-fuzzy filter of
$A$ is called an {\it interval valued $(\in,\ivq)$-fuzzy
$MV$-filter} of $A$ if

\smallskip
 $(F_{16})$ \
$\left\{\begin{array}{llll}\widehat{\mu_F}(((y\rightarrow
x)\hookrightarrow x)\rightarrow y)\ge
\mathrm{rmin}\{\widehat{\mu_F}(x\rightarrow
y),\,[0.5,0.5]\},\\[1mm]
\widehat{\mu_F}(((y\hookrightarrow x)\rightarrow x)\hookrightarrow
y)\ge
\mathrm{rmin}\{\widehat{\mu_F}(x\hookrightarrow
y),\,[0.5,0.5]\}\end{array}\right.$\\[2mm]
holds for all $x,y\in A$.
\end{definition}

It follows from Proposition \ref{P4.2}$(2)$ that every interval
valued $(\in,\ivq)$-fuzzy implicative filter is an interval valued
$(\in,\ivq)$-fuzzy $MV$-filter.

\begin{definition}\label{D4.8}\rm An interval valued $(\in,\ivq)$-fuzzy filter of $A$ is
called an {\it interval valued $(\in,\ivq)$-fuzzy $G$-filter} of
$A$ if
\smallskip

$(F_{17})$ \ $\left\{\begin{array}{llll}\widehat{\mu_F}(x\rightarrow
y) \ge
\mathrm{rmin}\{\widehat{\mu_F}(x\rightarrow (x\rightarrow
y)),\,[0.5,0.5]\},\\[1mm]
\widehat{\mu_F}(x\hookrightarrow y) \ge
\mathrm{rmin}\{\widehat{\mu_F}(x\hookrightarrow
(x\hookrightarrow y)),\,[0.5,0.5]\}\end{array}\right.$\\[2mm]
holds for all $x,y\in A$.
\end{definition}

\begin{lemma}\label{L4.9} Every interval valued $(\in,\ivq)$-fuzzy implicative
filter is an interval valued $(\in,\ivq)$-fuzzy $G$-filter.
\end{lemma}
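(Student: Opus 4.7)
The plan is to deduce $(F_{17})$ from the implicative condition $(F_{13})$ applied with $x\rightarrow y$ (respectively $x\hookrightarrow y$) in the role of $x$, combined with the monotonicity condition $(F_6)$ furnished by Theorem~\ref{T3.5} (which applies since an interval valued $(\in,\ivq)$-fuzzy implicative filter is, by definition, already an interval valued $(\in,\ivq)$-fuzzy filter). All of the non-fuzzy content of the argument boils down to the pseudo-$BL$ order inequalities
\[
x\rightarrow(x\rightarrow y)\ \le\ ((x\rightarrow y)\rightarrow y)\hookrightarrow(x\rightarrow y)
\]
and its dual
\[
x\hookrightarrow(x\hookrightarrow y)\ \le\ ((x\hookrightarrow y)\hookrightarrow y)\rightarrow(x\hookrightarrow y).
\]

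To verify the first inequality I would set $p=x\rightarrow y$ and $q=x\rightarrow(x\rightarrow y)$, and then peel off residuals using $(a_1)$ until the target reduces to $(p\rightarrow y)\odot q\odot x\le y$. The definition of $q$ gives $q\odot x\le p$ via $(a_1)$, and the right monotonicity rule $(6)$ of $\odot$ then yields $(p\rightarrow y)\odot(q\odot x)\le(p\rightarrow y)\odot p$; a final application of $(a_1)$ to the trivial inequality $p\rightarrow y\le p\rightarrow y$ gives $(p\rightarrow y)\odot p\le y$, closing the chain. The dual inequality is handled symmetrically: setting $p'=x\hookrightarrow y$ and $q'=x\hookrightarrow p'$, the analogous unfolding reduces matters to $x\odot q'\odot(p'\hookrightarrow y)\le y$, where $x\odot q'\le p'$ by $(a_1)$ and $p'\odot(p'\hookrightarrow y)=p'\wedge y\le y$ by $(a_2)$, with the left monotonicity rule $(5)$ used to insert $(p'\hookrightarrow y)$ into the chain.

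Once the pseudo-$BL$ inequalities are in hand, I would apply $(F_6)$ to pass from $x\rightarrow(x\rightarrow y)$ up to $((x\rightarrow y)\rightarrow y)\hookrightarrow(x\rightarrow y)$, obtaining
\[
\widehat{\mu_F}\bigl(((x\rightarrow y)\rightarrow y)\hookrightarrow(x\rightarrow y)\bigr)\ \ge\ \mathrm{rmin}\{\widehat{\mu_F}(x\rightarrow(x\rightarrow y)),\,[0.5,0.5]\},
\]
and then invoke $(F_{13})$ in the form
\[
\widehat{\mu_F}(x\rightarrow y)\ \ge\ \mathrm{rmin}\{\widehat{\mu_F}(((x\rightarrow y)\rightarrow y)\hookrightarrow(x\rightarrow y)),\,[0.5,0.5]\}.
\]
Chaining these two estimates and collapsing the nested $\mathrm{rmin}$ with $[0.5,0.5]$ delivers the first clause of $(F_{17})$; the second clause follows from the dual inequality by the same two-step procedure.

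The main obstacle is the pseudo-$BL$ bookkeeping in the middle step: because the algebra is non-commutative one must consistently track whether the left residual $\rightarrow$ or the right residual $\hookrightarrow$ is being used at each application of $(a_1)$, and correspondingly choose the correct side of the monotonicity property ($(5)$ versus $(6)$) when multiplying by $\odot$. The two fuzzy steps at the end are then a routine cascade of $(F_6)$ and $(F_{13})$.
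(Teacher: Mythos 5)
Your proposal is correct and follows essentially the same route as the paper: both reduce $(F_{17})$ to the order inequality $x\hookrightarrow(x\hookrightarrow y)\le((x\hookrightarrow y)\hookrightarrow y)\rightarrow(x\hookrightarrow y)$ (and its dual), then chain $(F_6)$ with $(F_{13})$ applied to $x\hookrightarrow y$ (resp.\ $x\rightarrow y$) in the role of $x$. The only difference is that the paper simply cites this inequality from Di Nola et al., whereas you supply a correct residuation argument for it from $(a_1)$, $(a_2)$ and the monotonicity rules.
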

\begin{proof} Let $F$ be an interval valued
$(\in,\ivq)$-fuzzy implicative filter of $A$. As it is well know
(see \cite{8}), for any $x,y\in A$, we have
$x\hookrightarrow(x\hookrightarrow y)\le ((x\hookrightarrow
y)\hookrightarrow y)\rightarrow (x\hookrightarrow y)$. Whence,
according to $(F_6)$ we obtain $\widehat{\mu_F}(((x\hookrightarrow
y)\hookrightarrow y)\rightarrow(x\hookrightarrow y))\ge
\mathrm{rmin}\{\widehat{\mu_F}(x\hookrightarrow(x\hookrightarrow
y)),\,[0.5,0.5]\}$. From this, applying $(F_{13})$, we get
$\widehat{\mu_F}(x\hookrightarrow y)\ge
\mathrm{rmin}\{\widehat{\mu_F}(((x\hookrightarrow
y)\hookrightarrow y)\rightarrow(x\hookrightarrow
y)),\,[0.5,0.5]\}\ge
\mathrm{rmin}\{\widehat{\mu_F}(x\hookrightarrow(x\hookrightarrow
y)),\,[0.5,0.5]\}$. This proves the second inequality of
$(F_{17})$.

The proof of the first inequality is similar.
 \end{proof}

\begin{theorem}\label{T4.10} For any interval valued $(\in,\ivq)$-fuzzy
filter $F$ of $A$ satisfying the identity
$\widehat{\mu_F}(x\rightarrow y)=\widehat{\mu_F}(x\hookrightarrow
y)$ the following two conditions are equivalent:

\begin{enumerate}
\item[{$(1)$}] $F$ is an interval valued $(\in,\ivq)$-fuzzy
implicative filter;
\item[{$(2)$}] $F$ is an interval valued $(\in,\ivq)$-fuzzy $MV$-filter and
an interval valued $(\in,\ivq)$-fuzzy $G$-filter.
\end{enumerate}
\end{theorem}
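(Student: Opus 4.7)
The forward implication $(1)\Rightarrow(2)$ requires essentially no work and does not use the symmetry hypothesis. Proposition \ref{P4.2}(2) is exactly the $MV$-filter condition $(F_{16})$, and Lemma \ref{L4.9} gives the $G$-filter condition $(F_{17})$; so an interval valued $(\in,\ivq)$-fuzzy implicative filter is automatically both an $MV$-filter and a $G$-filter.

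For the converse $(2)\Rightarrow(1)$, assume $F$ is both an interval valued $(\in,\ivq)$-fuzzy $MV$-filter and an interval valued $(\in,\ivq)$-fuzzy $G$-filter, and that $\widehat{\mu_F}(x\rightarrow y)=\widehat{\mu_F}(x\hookrightarrow y)$ for all $x,y\in A$. The plan is first to observe that, thanks to this symmetry, the two inequalities of $(F_{13})$ are interchangeable: proving
\[
\widehat{\mu_F}(x)\ge \mathrm{rmin}\{\widehat{\mu_F}((x\rightarrow y)\hookrightarrow x),\,[0.5,0.5]\}
\]
for all $x,y\in A$ automatically yields the symmetric inequality by swapping $\rightarrow$ and $\hookrightarrow$ in the argument of $\widehat{\mu_F}$. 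So it suffices to concentrate on this first inequality.

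To bound $\widehat{\mu_F}(x)$ below, I would introduce $a=(x\rightarrow y)\hookrightarrow x$ and aim to chain four kinds of information in the following order. First, use a pseudo-$BL$ identity in the spirit of property $(4)$ of the preliminaries (together with $(1)$-$(2)$) to locate an auxiliary element $w$ such that $w\hookrightarrow(w\hookrightarrow x)$ has $\widehat{\mu_F}$-value at least $\mathrm{rmin}\{\widehat{\mu_F}(a),[0.5,0.5]\}$; typical candidates are $w=x\rightarrow y$ or $w=((x\rightarrow y)\hookrightarrow y)$. Second, apply the $G$-filter condition $(F_{17})$ to collapse the nested implication and obtain a bound on $\widehat{\mu_F}(w\hookrightarrow x)$. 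Third, apply the $MV$-filter condition $(F_{16})$ to convert this bound into a bound on $\widehat{\mu_F}$ of an expression that is $\le x$, exploiting the symmetry identity to freely switch between $\rightarrow$ and $\hookrightarrow$ whenever an $MV$ or $G$ step requires the other implication. Fourth, use the monotonicity clause $(F_6)$ of Theorem \ref{T3.5} to transport the resulting bound up to $\widehat{\mu_F}(x)$ itself. Throughout, the rmin-with-$[0.5,0.5]$ thresholds compose cleanly, so no loss occurs.

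The main obstacle is the first step: identifying the right pseudo-$BL$ identity expressing $(x\rightarrow y)\hookrightarrow x$ in the form $w\hookrightarrow(w\hookrightarrow x)$ (or bounding the latter above by the former). This is the computational heart of the argument, and is precisely where the non-commutativity of $\odot$ forces careful bookkeeping of left versus right implications; the symmetry hypothesis $\widehat{\mu_F}(x\rightarrow y)=\widehat{\mu_F}(x\hookrightarrow y)$ is what turns an asymmetric algebraic identity into a usable $\widehat{\mu_F}$-inequality. Once that identity is in place, the remaining steps are routine applications of $(F_5)$, $(F_6)$, $(F_{16})$, and $(F_{17})$.
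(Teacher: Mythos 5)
Your forward direction is fine and is exactly the paper's: Proposition \ref{P4.2}(2) and Lemma \ref{L4.9} give the $MV$- and $G$-filter conditions immediately. The converse, however, is only a plan, and the step you yourself flag as ``the computational heart'' is precisely the part that is missing: you never exhibit the auxiliary element $w$ or the inequality that launches the chain, so as written nothing establishes $(F_{13})$. The identity that actually works is property $(4)$ of the preliminaries, $(x\hookrightarrow y)\hookrightarrow x\le (x\hookrightarrow y)\rightarrow((x\hookrightarrow y)\hookrightarrow y)$; thus $w=x\hookrightarrow y$ and the inner target is $y$, not $x$ as in your template $w\hookrightarrow(w\hookrightarrow x)$. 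From it, $(F_6)$, the symmetry hypothesis (to trade the outer $\rightarrow$ for $\hookrightarrow$), and the $G$-condition $(F_{17})$ yield a lower bound on $\widehat{\mu_F}((x\hookrightarrow y)\hookrightarrow y)$ in terms of $\widehat{\mu_F}((x\hookrightarrow y)\hookrightarrow x)$.

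Beyond that, the single linear chain you describe --- ending with monotonicity transporting a bound ``up to $\widehat{\mu_F}(x)$'' --- does not match the shape of the argument that is needed. The proof is a two-branch one: besides the bound on $\widehat{\mu_F}((x\hookrightarrow y)\hookrightarrow y)$ just described, one separately uses $y\le x\hookrightarrow y$, hence $(x\hookrightarrow y)\hookrightarrow x\le y\hookrightarrow x$, to bound $\widehat{\mu_F}(y\hookrightarrow x)$, feeds that into the $MV$-condition $(F_{16})$ to bound $\widehat{\mu_F}(((x\hookrightarrow y)\rightarrow y)\hookrightarrow x)$, and only then combines the two branches via the modus-ponens property $(F_8^{\prime})$ of an interval valued $(\in,\ivq)$-fuzzy filter to reach $\widehat{\mu_F}(x)$; plain monotonicity cannot close the argument because $x$ is not above any single term produced along the way. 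Finally, your opening claim that the two inequalities of $(F_{13})$ are interchangeable ``by swapping $\rightarrow$ and $\hookrightarrow$'' is not licensed by the hypothesis as stated: $\widehat{\mu_F}(a\rightarrow b)=\widehat{\mu_F}(a\hookrightarrow b)$ only exchanges the outermost implication, whereas $(x\rightarrow y)\hookrightarrow x$ and $(x\hookrightarrow y)\rightarrow x$ also differ in the inner one; the second inequality has to come from running the dual argument, not from a formal substitution.
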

\begin{proof} $(1)\Longrightarrow (2)$ By Proposition
\ref{P4.2} $(2)$ and Lemma \ref{L4.9}.

$(2)\Longrightarrow (1)$ Let $F$ be an interval valued
$(\in,\ivq)$-fuzzy $MV$-filter and an interval valued
$(\in,\ivq)$-fuzzy $G$-filter of $A$. From $(x\hookrightarrow
y)\hookrightarrow x\le (x\hookrightarrow y)\rightarrow
((x\hookrightarrow y)\hookrightarrow y)$ (see \cite{8}), we have
\[\arraycolsep=.5mm
\begin{array}{lll}\widehat{\mu_F}((x\hookrightarrow y)\hookrightarrow
((x\hookrightarrow y)\hookrightarrow y))&
=\widehat{\mu_F}((x\hookrightarrow y)\rightarrow
((x\hookrightarrow y)\hookrightarrow
y))\\[2mm]
&\ge\mathrm{rmin}\{\widehat{\mu_F}((x\hookrightarrow
y)\hookrightarrow x),\,[0.5,0.5]\},
\end{array}
\]
which together with the fact that $F$ is an interval valued
$(\in,\ivq)$-fuzzy $G$-filter of $A$ gives
\[\arraycolsep=.5mm
\begin{array}{lll}\widehat{\mu_F}((x\hookrightarrow y)\hookrightarrow y)&\ge&\mathrm{rmin}
\{\widehat{\mu_F}((x\hookrightarrow y)\hookrightarrow ((x\rightarrow
y)\hookrightarrow y)),\,[0.5,0.5]\}\\[2mm]
&\ge&\mathrm{rmin}\{\widehat{\mu_F}((x\hookrightarrow
y)\hookrightarrow x),\,[0.5,0.5]\}.
\end{array}
\]
Moreover, from $y\le x\rightarrow y$ we get $(x\hookrightarrow
y)\hookrightarrow x\le y\hookrightarrow x$, and consequently
$$
\widehat{\mu_F}(y\hookrightarrow x)\ge
\mathrm{rmin}\{F((x\hookrightarrow y)\hookrightarrow
x),\,[0.5,0.5]\}.
$$
The fact that $F$ is an interval valued $(\in,\ivq)$-fuzzy
$MV$-filter of $A$ implies
\[\arraycolsep=.5mm
\begin{array}{lll}
\widehat{\mu_F}(((x\hookrightarrow y)\rightarrow y)\hookrightarrow
x)&\ge&\mathrm{rmin}\{\widehat{\mu_F}(y\hookrightarrow
x),\,[0.5,0.5]\}\\[2mm]
&\ge&\mathrm{rmin}\{\widehat{\mu_F}((x\hookrightarrow
y)\hookrightarrow x),\,[0.5,0.5]\}. \end{array}
\]
Since $F$ is an interval valued $(\in,\ivq)$-fuzzy filter of $A$, we
also have
$$
\widehat{\mu_F}(x)\ge
\mathrm{rmin}\{\widehat{\mu_F}(((x\hookrightarrow y)\rightarrow
y)\hookrightarrow x),\,\widehat{\mu_F}((x\hookrightarrow
y)\rightarrow y),\,[0.5,0.5]\}.
$$

Summarizing the above, we obtain $\widehat{\mu_F}(x)\ge
\mathrm{rmin}\{\widehat{\mu_F}((x\hookrightarrow y)\hookrightarrow
x),\,[0.5,0.5]\}$. Hence, $F$ is an interval valued
$(\in,\ivq)$-fuzzy implicative filter of $A$.
 \end{proof}

\paragraph {}\noindent{\bf Problem.} Prove or disprove that in Theorem
\ref{T4.10} the assumption $\widehat{\mu_F}(x\rightarrow
y)=\widehat{\mu_F}(x\hookrightarrow y)$ is essential.

\section{Implication-based interval valued fuzzy implicative filters}

Fuzzy logic is an extension of set theoretic variables to terms of
the linguistic variable truth. Some operators, like $\wedge,
\vee,\neg,\rightarrow$ in fuzzy logic also can be defined by using
the tables of valuations. Also, the extension principle can be
used to derive definitions of the operators.

In the fuzzy logic, the truth value of fuzzy proposition $P$ is
denoted by $[P]$. The correspondence between fuzzy logical and
set-theoretical notations is presented below:

$[x\in F]=F(x)$,

$[x\notin F]=1-F(x)$,

$[P\wedge Q]=\min\{[P],[Q]\}$,

 $[P\vee Q]=\max\{[P],[Q]\}$,

$[P\rightarrow Q]=\min\{1,1-[P]+[Q]\}$,

$[\forall xP(x)]=\inf [P(x)]$,

$\models P\Longleftrightarrow [P]=1$ for all valuations.

\medskip

Of course, various implication operators can be defined similarly.
In the table presented below we give the example of such
definitions. In this table $\alpha$ denotes the degree of truth
(or degree of membership) of the premise, $\beta$ is the values
for the consequence, and $I$ the result for the corresponding
implication:

\begin{tabular}{ll}\\ \hline
 Name & Definition of Implication Operators\\ \hline\\
Early Zadeh &
$I_m(\alpha,\beta)=\max\{1-\alpha,\min\{\alpha,\beta\}\}$,\\[2mm]
Lukasiewicz & $I_a(\alpha,\beta)=\min\{1,1-\alpha+\beta\}$,\\
Standard Star (G\"odel) &
$I_g(\alpha,\beta)=\left\{\begin{array}{lll} 1 & \mbox{if }&
\alpha\le\beta ,\\ \beta & \mbox{if }& \alpha>\beta ,\end{array}\right.$\\
Contraposition of G\"odel &
$I_{cg}(\alpha,\beta)=\left\{\begin{array}{lll} 1 & \mbox{if }&
\alpha\le\beta ,\\ 1-\alpha & \mbox{if }& \alpha>\beta, \end{array}\right.$\\
Gaines--Rescher   &
$I_{gr}(\alpha,\beta)=\left\{\begin{array}{lll}
1 & \mbox{if }& \alpha\le\beta ,\\ 0 & \mbox{if }&\alpha>\beta, \end{array}\right.$\\
Kleene--Dienes  & $I_b(\alpha,\beta)=\max\{1-\alpha,\beta\}$.\\
\hline
\end{tabular}

\bigskip The ''quality'' of these implication operators could be
evaluated either by empirically or by axiomatically methods.

Below we consider the implication operator defined in the
Lukasiewicz system of continuous-valued logic.

\begin{definition}\label{D5.1}\rm An interval valued  fuzzy set $F$ of $A$ is
called a {\it fuzzifying implicative filter} of $A$ if for any
$x,y,z\in A$ it satisfies the following four conditions:

\begin{enumerate}
\item[$(F_{18})$] $\models\big[ [x\in F]\wedge[y\in F] \rightarrow
[ x\odot y\in F]\,\big]$,
\item[$(F_{19})$] $\models\big[\,[x\in F]\rightarrow [y\in F]\,\big]$ for
any $x\le y$,
\item[$(F_{20})$] $\models\big[\,[(x\rightarrow y)\hookrightarrow x]
\rightarrow[x\in F]\,\big]$,
\item[$(F_{21})$] $\models\big[\,[(x\hookrightarrow y)\rightarrow x]
\rightarrow[x\in F]\,\big]$.
 \end{enumerate}\end{definition}

The concept of the ''standard'' tautology can be generalized to
the {\it $\widehat{\,t}$-tautology}, where $[0,0]<\widehat{\,t}\le
[1,1]$, in the following way:

\medskip
$\models_{\widehat{\,t}}  P\Longleftrightarrow [P]\ge
\widehat{\,t}$ \ for all valuations.

\medskip
This definition and results obtained in \cite{26} gives for us the
possibility to introduce such definition:

\begin{definition}\label{D5.2}\rm Let $[0,0]<\widehat{\,t}\le [1,1]$ be fixed. An interval valued  fuzzy set $F$ of
$A$ is called  a {\it $t$-implication-based interval valued fuzzy
implicative filter} of $A$ if for all $x,y,z\in A$ the following
conditions hold:

\begin{enumerate}
\item[$(F_{22})$] $\models_{\widehat{\,t}}\big[ [x\in F]\wedge[y\in
F] \rightarrow [x\odot y\in F]\,\big]$,
\item[$(F_{23})$] $\models_{\widehat{\,t}}\big[\,[x\in F]\rightarrow
[y\in F]\,\big]$ for all $x\le y$,
\item[$(F_{24})$] $\models_{\widehat{\,t}}\big[\,[(x\rightarrow
y)\hookrightarrow x] \rightarrow[x\in F]]$,
\item[$(F_{25})$] $\models_{\widehat{\,t}}\big[\,[(x\hookrightarrow
y)\rightarrow x] \rightarrow[x\in F]\,\big]$.
\end{enumerate}\end{definition}

In a special case when an implication operator is defined as $I$
we obtain:

\begin{corollary}\label{C5.3} An interval valued  fuzzy set $F$ of $A$
is a $t$-implication-based interval valued fuzzy implicative filter
if and only if for all $x,y,z\in A$ it satisfies:

\begin{enumerate}
\item[$(F_{26})$]
$I(\mathrm{rmin}\{\widehat{\mu_F}(x),\,\widehat{\mu_F}(y)\},\,\widehat{\mu_F}(x\odot
y))\ge \widehat{\,t}$,
\item[$(F_{27})$]
$I(\mathrm{rmin}\{\widehat{\mu_F}(x),\,\widehat{\mu_F}(y)\})\ge
\widehat{\,t}$ for all $x\le y$,
\item[$(F_{28})$] $I(\mathrm{rmin}\{\widehat{\mu_F}((x\rightarrow
y)\hookrightarrow x)\},\, \widehat{\mu_F}(x))\ge \widehat{\,t}$,
\item[$(F_{29})$] $I(\mathrm{rmin}\{\widehat{\mu_F}((x\hookrightarrow
y)\rightarrow x)\}, \,\widehat{\mu_F}(x))\ge \widehat{\,t}$.
\end{enumerate}
\end{corollary}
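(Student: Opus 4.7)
The plan is to show that Definition \ref{D5.2} is just a logical rephrasing of the arithmetic inequalities appearing in Corollary \ref{C5.3}, so the proof reduces to a direct unwinding of the fuzzy-logic notation introduced at the beginning of Section 5. No new construction is required beyond the translation dictionary and componentwise extension of the operators to $D[0,1]$.

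First I would record the translation dictionary. From the correspondence listed before Definition \ref{D5.1}, in the interval valued setting we have $[x\in F]=\widehat{\mu_F}(x)$, $[P\wedge Q]=\mathrm{rmin}\{[P],[Q]\}$ (applied componentwise on interval values), and, once a specific implication operator $I$ has been fixed, $[P\rightarrow Q]=I([P],[Q])$. The relation $\models_{\widehat{\,t}}P$ is, by definition, the inequality $[P]\ge \widehat{\,t}$ in the order of $D[0,1]$ fixed in Section 2.

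Then I would translate $(F_{22})$--$(F_{25})$ one by one. Applying the dictionary to $(F_{22})$ gives
\[
\big[\,[x\in F]\wedge [y\in F]\rightarrow [x\odot y\in F]\,\big]=I\big(\mathrm{rmin}\{\widehat{\mu_F}(x),\widehat{\mu_F}(y)\},\,\widehat{\mu_F}(x\odot y)\big),
\]
and $\models_{\widehat{\,t}}$ of this is precisely $(F_{26})$. The same substitution, applied to $(F_{23})$ (where the conjunction collapses to a single atom because $x\le y$ is an external hypothesis, not a conjunct), produces $(F_{27})$. For $(F_{24})$ the premise $(x\rightarrow y)\hookrightarrow x$ lies in $A$, so $[(x\rightarrow y)\hookrightarrow x\in F]=\widehat{\mu_F}((x\rightarrow y)\hookrightarrow x)$, and the outer implication then translates into the $I$-inequality of $(F_{28})$. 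The case $(F_{25})\Leftrightarrow(F_{29})$ is handled identically after exchanging $\rightarrow$ and $\hookrightarrow$.

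The only genuine point of care, and the closest thing to an obstacle, is making sure that the scalar fuzzy-logic operators of the table extend unambiguously to $D[0,1]$: conjunction must be read as $\mathrm{rmin}$, and the chosen implication operator $I$ (for instance $I_a$ for Lukasiewicz) must be evaluated componentwise on intervals so that its output lies in $D[0,1]$ and the inequality $\ge \widehat{\,t}$ is well defined. Once this convention is fixed, the corollary is immediate, since each of the four equivalences is a matter of substitution rather than of argument.
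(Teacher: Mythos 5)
Your proposal is correct and matches the paper's intent: the paper offers no written proof for Corollary \ref{C5.3}, treating it as an immediate specialization of Definition \ref{D5.2} once the implication operator $I$ is fixed, and your translation of $(F_{22})$--$(F_{25})$ into $(F_{26})$--$(F_{29})$ via the dictionary $[x\in F]=\widehat{\mu_F}(x)$, $[P\wedge Q]=\mathrm{rmin}\{[P],[Q]\}$, $[P\rightarrow Q]=I([P],[Q])$, $\models_{\widehat{\,t}}P\Leftrightarrow [P]\ge\widehat{\,t}$ is exactly that specialization. Your remark about fixing the componentwise extension of $I$ to $D[0,1]$ is a sensible precaution that the paper leaves implicit.
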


This gives a very good base for future study of filters in various
algebraic systems with implication operators. As an example we
present one theorem. In a similar way we can obtain other typical
results.

\begin{theorem}\label{T5.4} Let $F$ be an interval valued  fuzzy set of $A$.

\begin{enumerate}
\item[$(i)$] If $I=I_{gr}$, then $F$ is an
$ 0.5 $-implication-based interval valued fuzzy implicative filter
of $A$ if and only if $F$ is an interval valued  fuzzy implicative
filter with thresholds $(\widehat{r}=[0,0], \widehat{s}=[1,1])$.
\item[$(ii)$] If $I=I_g$, then $F$ is an $ 0.5 $-implication-based
fuzzy implicative filter of $A$ if and only if $F$ is an interval
valued  fuzzy implicative filter with thresholds
$(\widehat{r}=[0,0], \widehat{s}=[0.5,0.5])$.
\item[$(iii)$] If $I=I_{cg}$, then $F$ is an $ 0.5 $-implication-based
interval valued fuzzy implicative filter of $A$ if and only if $F$
is an interval valued  fuzzy implicative filter with thresholds
$(\widehat{r}=[0.5,0.5], \widehat{s}=[1,1])$.
\end{enumerate}
\end{theorem}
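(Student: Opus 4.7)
The plan is to treat the three cases uniformly: for each implication operator $I \in \{I_{gr}, I_g, I_{cg}\}$, I would first identify the set of pairs $(\widehat{\alpha},\widehat{\beta})$ satisfying $I(\widehat{\alpha},\widehat{\beta}) \ge [0.5,0.5]$ in closed form, and then substitute this characterization into conditions $(F_{26})$--$(F_{29})$ of Corollary \ref{C5.3} to match them term-by-term with $(F_{11})$, $(F_{12})$, $(F_{15})$ at the appropriate thresholds $(\widehat{r},\widehat{s})$. The interval-valued versions of $I$ are understood componentwise, so each numerical case analysis extends to the two endpoints of the interval.

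For part $(i)$, since $I_{gr}$ takes only the values $0$ and $1$, the inequality $I_{gr}(\widehat{\alpha},\widehat{\beta}) \ge [0.5,0.5]$ is equivalent to $\widehat{\alpha} \le \widehat{\beta}$. Plugging this into $(F_{26})$--$(F_{29})$, I obtain the bare inequalities $\widehat{\mu_F}(x \odot y) \ge \mathrm{rmin}\{\widehat{\mu_F}(x), \widehat{\mu_F}(y)\}$, $\widehat{\mu_F}(y) \ge \widehat{\mu_F}(x)$ when $x \le y$, and $\widehat{\mu_F}(x) \ge \widehat{\mu_F}((x \rightarrow y)\hookrightarrow x)$ together with its dual, which are precisely $(F_{11})$, $(F_{12})$, $(F_{15})$ when $\widehat{\alpha} = [0,0]$ (so the $\mathrm{rmax}$ terms collapse) and $\widehat{\beta} = [1,1]$ (so the $\mathrm{rmin}$ terms collapse).

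For part $(ii)$, I would show by a split on whether $\widehat{\alpha} \le [0.5,0.5]$ that $I_g(\widehat{\alpha},\widehat{\beta}) \ge [0.5,0.5]$ iff $\widehat{\beta} \ge \mathrm{rmin}\{\widehat{\alpha},[0.5,0.5]\}$; for part $(iii)$, a dual split on whether $\widehat{\alpha} \le [0.5,0.5]$ yields $I_{cg}(\widehat{\alpha},\widehat{\beta}) \ge [0.5,0.5]$ iff $\mathrm{rmax}\{\widehat{\beta},[0.5,0.5]\} \ge \widehat{\alpha}$. Substituting these characterizations into $(F_{26})$--$(F_{29})$ reproduces $(F_{11})$, $(F_{12})$, $(F_{15})$ with $(\widehat{r},\widehat{s}) = ([0,0],[0.5,0.5])$ for $I_g$, and with $(\widehat{r},\widehat{s}) = ([0.5,0.5],[1,1])$ for $I_{cg}$, since in the former case the threshold $\widehat{\beta} = [0.5,0.5]$ gets absorbed into the inner $\mathrm{rmin}$, while in the latter case the threshold $\widehat{\alpha} = [0.5,0.5]$ appears inside the outer $\mathrm{rmax}$.

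The main obstacle is the interval-valued case analysis for $I_g$ and $I_{cg}$: these operators are defined piecewise via $\alpha \le \beta$ vs.\ $\alpha > \beta$, and one must verify that the componentwise extension of "$I(\widehat{\alpha},\widehat{\beta}) \ge [0.5,0.5]$" really does collapse to the clean identities above, rather than producing a more intricate mixed condition on the two endpoints. Once that componentwise equivalence is established, the rest is a direct algebraic matching against Definition \ref{D4.5}, and no deeper structural facts about pseudo $BL$-algebras are invoked.
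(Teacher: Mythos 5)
Your proposal is correct, and in fact supplies an argument where the paper gives none: the authors dismiss Theorem \ref{T5.4} with ``the proofs are straightforward and hence are omitted,'' so there is no proof of record to compare against. Your three scalar characterizations are the right ones and are easily checked: $I_{gr}(\alpha,\beta)\ge 0.5$ iff $\alpha\le\beta$; $I_g(\alpha,\beta)\ge 0.5$ iff $\beta\ge\min\{\alpha,0.5\}$ (split on $\alpha\le 0.5$ versus $\alpha>0.5$); and $I_{cg}(\alpha,\beta)\ge 0.5$ iff $\max\{\beta,0.5\}\ge\alpha$. Substituting these into $(F_{26})$--$(F_{29})$ does reproduce $(F_{11})$, $(F_{12})$, $(F_{15})$ at the stated thresholds, which is exactly the intended reduction.

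The one genuine loose end is the point you yourself flag: the paper never defines how $I_g$ and $I_{cg}$, which are piecewise in $\alpha\le\beta$ versus $\alpha>\beta$, act on elements of $D[0,1]$. If one adopts the componentwise reading $I(\widehat{\alpha},\widehat{\beta})\ge[0.5,0.5]$ meaning $I(\alpha^{\bot},\beta^{\bot})\ge 0.5$ and $I(\alpha^{\top},\beta^{\top})\ge 0.5$, then your scalar equivalences apply in each coordinate and the clean interval identities follow, aided by the paper's standing convention that every $\widehat{\mu_F}(x)$ lies entirely below or entirely at-or-above $[0.5,0.5]$. You should state that definitional choice explicitly rather than leave it as an ``obstacle,'' since under other natural extensions (e.g.\ $[I(\alpha^{\top},\beta^{\bot}),I(\alpha^{\bot},\beta^{\top})]$, which is what one would use to get a genuine interval from an operator antitone in its first argument) the case analysis is different; but this is a gap in the paper's setup, not in your argument.
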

\begin{proof} The proofs are straightforward and hence
are omitted.
 \end{proof}

\section{Conclusions}
Interval valued fuzzy set theory emerges from the observation but
that in a number of cases, no objective procedure is available for
selecting the crisp membership degrees of elements in a fuzzy set.
It was suggested to alleviate that problem by allowing to specify
only an interval  to which the actual membership degree is assumed
to belong.  In this paper, we considered different type of interval
valued $(\in,\ivq)$-fuzzy filters of pseudo $BL$-algebras and
investigated the relationship between these filters. Finally, we
proposed the concept of implication-based interval valued fuzzy
implicative filters of pseudo $BL$-algebras, which seems to be a
good support for future study. The other direction of future study
is an investigation of interval valued $(\alpha,\beta)$-fuzzy
(implicative) filters, where $\alpha,\beta$ are one of $\in,q,\ivq$
or $\iwq.$

\subsection*{Acknowledgements }

The research was partially supported by a grant of the National
Natural Science Foundation of China , grant No.60474022 and a grant
of the Key Science Foundation of Education Committee of Hubei
Province, China, No. D200729003.

{\small
}
\end{document}